\newtheorem{theorem}{Theorem}[section]
\newtheorem{lemma}[theorem]{Lemma}
\newtheorem{proposition}[theorem]{Proposition}
\newtheorem{definition}[theorem]{Definition}
\newtheorem{remark}[theorem]{Remark}
\newcommand{\dx}{{\mathrm d}x}
\newcommand{\dt}{{\mathrm d}t}
\newcommand{\dxdt}{\dx\dt}
\newcommand{\Om}{\Omega}
\newcommand{\e}{\varepsilon}
\newcommand{\diver}{\mathrm{div}}
\newcommand{\grad}{\nabla}
\newcommand{\bfu}{{\bf u}}
\newcommand{\bfU}{{\bf U}}
\newcommand{\bff}{{\bf f}}
\newcommand{\bfW}{{\bf W}}
\newcommand{\bfv}{{\bf v}}
\newcommand{\bfxi}{{\boldsymbol{\xi}}}
\newcommand{\Pt}{\partial_{t}}
\newcommand{\weak}{\rightharpoonup}
\newcommand{\cost}{\mathcal{J}}
\newcommand{\adcont}{\mathcal{U}_{\mathrm{ad}}}
\newcommand{\adall}{\mathcal{A}_{\mathrm{ad}}}
\begin{document}



\title{Necessary conditions for distributed optimal control of linearized compressible Navier-Stokes equations with state constraints}

\author{Stefan Doboszczak, Manil T. Mohan, and Sivaguru S. Sritharan
}
\date{}
\maketitle

\begin{abstract}
A Pontryagin maximum principle for an optimal control problem in three dimensional linearized compressible viscous flows is established using the Ekeland variational principle. The controls are distributed over a bounded domain, while the state variables are subject to a set of constraints and governed by the linearized compressible Navier-Stokes equations. The maximum principle is of integral-type and obtained for minimizers of a tracking-type integral cost functional. 
\end{abstract}



\section{Introduction}
\subsection{Governing equations}
Let $Q_T=(0,T)\times\Om$ denote a space-time domain, where 
$T>0$ is fixed and $\Om\subset\mathbb{R}^3$ is bounded. The state of a compressible and viscous fluid in $Q_T$ may be modeled by the \emph{compressible Navier-Stokes} equations,
	\begin{subequations}\label{NS_01}\begin{align}
		\Pt\rho + \diver(\rho\bfu)&=0,\label{l02}\\
		\rho(\Pt\bfu+\bfu\cdot\grad\bfu)+\grad p &= \diver\,\mathbb{S}+\rho\bff,\label{l01}
	\end{align}\end{subequations}
consisting respectively of the conservation of fluid mass and fluid momentum,
where $\rho(t,x)\in\mathbb{R}^+$ denotes the fluid density and $\bfu(t,x)\in\mathbb{R}^3$ denotes the fluid velocity. 
The body force (acceleration) $\bff(t,x)\in\mathbb{R}^3$ is fixed, denoting for instance a gravitational force. The system \eqref{NS_01} is supplemented with constitutive laws for the pressure $p(\rho)$ and viscous stress tensor 
$\mathbb{S}(\grad\bfu)$, as well as initial and boundary conditions.

While the nonlinear system \eqref{NS_01} describes the general evolution of barotropic compressible fluids, the focus of this paper is on a control problem for the \emph{linearized compressible Navier-Stokes} equations. Employing the standard linearization about a fixed state $(\widetilde{\rho},\widetilde{\bfu})$ satisfying \eqref{NS_01}, we formulate the (controlled) linearized equations as follows,
	\begin{subequations}\label{NSlin}\begin{align}
		\Pt\rho + \diver(\rho\widetilde{\bfu})+\diver(\widetilde{\rho}\bfu)&=0, \label{l1}\\
		\Pt\bfu+ L(\rho,\bfu)
		&= (\widetilde{\rho})^{-1}[\diver\,\mathbb{S}(\grad\bfu)+\rho\bff+\bfU] \label{l2},
	\end{align}\end{subequations}
where a \emph{distributed control} $\bfU$ has been introduced in the linearized momentum equation \eqref{l2}. The linear operator $L$ is given by
	\begin{equation*}
		L(\rho,\bfu)
		=\bfu\cdot\grad\widetilde{\bfu}+\widetilde{\bfu}\cdot\grad\bfu+(\widetilde{\rho})^{-1}[\grad(\rho p'(\widetilde{\rho}))+\rho(\Pt\widetilde{\bfu}+\widetilde{\bfu}\cdot\grad\widetilde{\bfu})].
	\end{equation*}
Here $p'(\cdot)$ denotes differentiation with respect to the argument. The solution $(\rho,\bfu)$ of the linearized system without control may be understood as representing the dynamics of a small perturbation of the state $(\widetilde{\rho},\widetilde{\bfu})$.

The linearized system \eqref{NSlin} is supplemented with a general barotropic pressure law,
	\begin{equation*}
		p = p(\widetilde{\rho}).
	\end{equation*} 
Note that in \eqref{l2} the pressure appears only as a coefficient, and is independent of $\rho$. 

The viscous stress tensor $\mathbb{S}$ is given by the Newtonian law
	\begin{equation}\label{stress}
		\mathbb{S}(\grad\bfu) 
		= \mu\left(\grad\bfu+(\grad\bfu)^T-\frac{2}{3}\diver\,\bfu\mathbb{I}\right)+\eta\diver\,\bfu\mathbb{I}, 
	\end{equation}
where the constant $\mu>0$ is the shear viscosity, and the constant $\eta\ge 0$ is the bulk viscosity. Defining
$\displaystyle{\lambda := \eta-(2/3)\mu}$, the diffusive term in \eqref{l2} may be written
	\begin{equation*}
		\diver\,\mathbb{S}(\grad\bfu)=\mu\Delta\bfu + (\mu+\lambda)\grad\diver\,\bfu.
	\end{equation*}
We also suppose that $4\mu+3\lambda >0$ (cf. \cref{lame}). 

Denoting the space-time boundary $\Gamma_T = (0,T)\times\partial\Om$, we furthermore suppose the fluid satisfies the no-slip boundary condition,
	\begin{equation}\label{boundary}
		\bfu(t,x)= 0\quad\hbox{on}\,\,\Gamma_T.
	\end{equation}
Finally, a set of initial conditions are imposed,
	\begin{equation*}
		\bfu(0,x)=\bfu_0(x),\,\,\rho(0,x)=\rho_0(x).
	\end{equation*} 
\subsection{Notation}
The space-time domain $Q_T=(0,T)\times\Om$ is fixed, where $T>0$, and $\Om\subset\mathbb{R}^3$ is an open, bounded subset. 
The boundary of $Q_T$ is denoted $\Gamma_T = (0,T)\times\partial\Om$. 
Since $T$ and $\Om$ are fixed, we use the abbreviated notation $L^p_t(L^q_x)= L^p(0,T;L^q(\Om;\mathbb{R}^N))$ 
and $L^p_t(W^{k,p}_{x})= L^p(0,T;W^{k,p}(\Om;\mathbb{R}^N))$ to denote the standard Bochner spaces endowed with their
strong topologies (the range $\mathbb{R}^N$ will be clear from context). We also write $H^k_x= W^{k,2}_x$ when $p=2$, and for $H^1_x$ functions vanishing on the boundary in the sense of trace we write $H^1_{0,x}$. By $C([0,T];X)$ we denote functions continuous on $[0,T]$ with respect to the strong topology on a Banach space $X$.

For a vector quantity $\bfu\in\mathbb{R}^3$, $\grad\bfu$ is the Jacobian matrix, while the Hessian $\grad^2\bfu$ is a third-order tensor.
The tensor product ${\bf{a}}\otimes{\bf{b}}$ of two vectors ${\bf{a}}$ and ${\bf{b}}$ is a second-order tensor defined componentwise 
by $a_i b_j$ ($i,j=1,2,3$). For two second-order tensors ${\bf{A}}$ and ${\bf{B}}$, we denote their Frobenius inner product by 
${\bf{A}}\!:\!{\bf{B}}=\sum_{i,j=1}^3 A_{ij}B_{ij}$. By $C$ we denote an arbitrary constant which may change values. Occasionally the dependence of $C$ on other constants will be clearly specified. The $L^2(\Om)$ inner product will be denoted $\langle\cdot,\cdot\rangle$. Finally, $a\lesssim b$ means that $a\le Cb$ for some positive constant $C$. Other notation will be introduced as necessary.

\subsection{Strong solutions of the governing equations}
The following theorem provides the existence of strong solutions for the linearized system \eqref{NSlin}.
	\begin{theorem}\label{mainthm}
		Fix $q\ge 3$. Let $Q_T$ be fixed with $\partial\Om\in C^\infty$. Let $(\widetilde{\rho},\widetilde{\bfu})$  
		be a smooth solution up to the boundary of the nonlinear system \eqref{NS_01} 
		emanating from initial data $(\widetilde{\rho}_0, \widetilde{\bfu}_0)$, 
		where $\mathbb{S}(\grad\widetilde{\bfu})$ is given by \eqref{stress}, 
		$p\in C^2(0,\infty)$, $\widetilde{\bfu}$ satisfies the no-slip boundary condition \eqref{boundary}, 
		and $\bff\in L^2_t(L^q_x)$. 
		Suppose furthermore that for all $(t,x)\in Q_T$, there exist constants $m$ and $M$  
		such that 
			\[0<m\le \widetilde{\rho}(t,x)\le M<\infty.\]
		Let $\rho_0\in H^1_x$ and $\bfu_0\in H^1_{0,x}$, and suppose $\bfU\in L^2_t(L^2_x)$. Then there exists a
		unique strong solution $(\rho,\bfu)$ 
		of the linearized system \eqref{NSlin}, and supplemented with the no-slip 
		condition \eqref{boundary}, such that
			\begin{equation}\label{mainest}\begin{aligned}
				&\rho\in L^\infty_t(H^1_x),\quad \Pt\rho\in L^\infty_t(L^2_x),\\
				&\bfu\in L^\infty_t(H^1_{0,x})\cap L^2_t(H^2_x),\quad \Pt\bfu \in L^2_t(L^2_x).
			\end{aligned}\end{equation}
	\end{theorem}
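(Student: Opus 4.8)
The plan is to combine a set of uniform a priori estimates with a decoupling fixed-point construction, and then patch the resulting local solutions together in time.

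\emph{A priori estimates (the heart of the argument).} Assume first that $(\rho,\bfu)$ is a sufficiently regular solution and estimate it. For the density, multiplying \eqref{l1} by $\rho$ and (after applying $\grad$) by $\grad\rho$, and using that $\diver(\widetilde{\rho}\bfu)$ and its gradient cost one and two spatial derivatives of $\bfu$, gives $\frac{d}{dt}\|\rho\|_{H^1_x}^2\le C\|\rho\|_{H^1_x}^2+C\|\bfu\|_{H^2_x}^2$; moreover $\Pt\rho$ is read off from \eqref{l1} as $\|\Pt\rho\|_{L^2_x}\le C(\|\rho\|_{H^1_x}+\|\bfu\|_{H^1_x})$. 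For the velocity, multiply \eqref{l2} weighted by $\widetilde{\rho}$ by $\Pt\bfu$: the diffusion term produces $-\tfrac12\frac{d}{dt}\int\mathbb{S}(\grad\bfu)\!:\!\grad\bfu$, which is bounded below by $c\|\grad\bfu\|_{L^2_x}^2$ by the Korn-type coercivity guaranteed by $4\mu+3\lambda>0$; the pressure-gradient term $\grad(\rho p'(\widetilde{\rho}))$ inside $L$ costs $\|\rho\|_{H^1_x}$, while $\|\rho\bff\|_{L^2_x}\le\|\rho\|_{L^6_x}\|\bff\|_{L^3_x}\lesssim\|\rho\|_{H^1_x}\|\bff\|_{L^q_x}$ — this is exactly where $q\ge 3$ is used. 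This yields $\frac{d}{dt}\|\grad\bfu\|_{L^2_x}^2+c\|\Pt\bfu\|_{L^2_x}^2\le C\|\bfu\|_{H^1_x}^2+C(1+\|\bff\|_{L^q_x}^2)\|\rho\|_{H^1_x}^2+C\|\bfU\|_{L^2_x}^2$. Finally, viewing \eqref{l2} as an elliptic (Lam\'e) system for $\bfu$ with right-hand side in $L^2_x$ and homogeneous Dirichlet data on the $C^\infty$ boundary, elliptic regularity gives $\|\bfu\|_{H^2_x}^2\lesssim\|\Pt\bfu\|_{L^2_x}^2+\|\bfu\|_{H^1_x}^2+(1+\|\bff\|_{L^q_x}^2)\|\rho\|_{H^1_x}^2+\|\bfU\|_{L^2_x}^2$.

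\emph{Closing the estimate.} Set $E(t)=\|\rho(t)\|_{H^1_x}^2+K\|\bfu(t)\|_{H^1_x}^2$. Substituting the $H^2_x$ bound into the density estimate and choosing $K$ large enough to absorb the resulting $\|\Pt\bfu\|_{L^2_x}^2$ contribution gives $\frac{d}{dt}E+c'\|\Pt\bfu\|_{L^2_x}^2\le C(1+\|\bff\|_{L^q_x}^2)E+C\|\bfU\|_{L^2_x}^2$. Since $t\mapsto\|\bff(t)\|_{L^q_x}^2$ lies in $L^1(0,T)$, Gronwall's inequality yields $\rho,\bfu\in L^\infty_t(H^1_x)$ and $\Pt\bfu\in L^2_t(L^2_x)$; the elliptic bound then gives $\bfu\in L^2_t(H^2_x)$, and reinspecting \eqref{l1} gives $\Pt\rho\in L^\infty_t(L^2_x)$, which is exactly \eqref{mainest}.

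\emph{Construction and uniqueness.} For existence, exploit the hyperbolic--parabolic splitting: given $\bar\rho$, solve the linear parabolic system \eqref{l2} for $\bfu$ by standard maximal-regularity theory for uniformly elliptic systems with smooth coefficients (obtaining $\bfu\in L^\infty_t(H^1_{0,x})\cap L^2_t(H^2_x)$ with $\Pt\bfu\in L^2_t(L^2_x)$), then solve the linear transport equation \eqref{l1} for $\rho$ along the characteristics of the smooth, boundary-tangent field $\widetilde{\bfu}$; denote the map $\bar\rho\mapsto\rho$ by $\mathcal{T}$. The estimates above, tracked through $\mathcal{T}$ and through the increment $\mathcal{T}\bar\rho_1-\mathcal{T}\bar\rho_2$, show that on a short interval $[0,T^*]$ (with $T^*$ controlled through $\int_0^{T^*}(1+\|\bff\|_{L^q_x}^2)\,\dt$) $\mathcal{T}$ maps a closed ball of $C([0,T^*];H^1_x)$ into itself and is a contraction there, so the Banach fixed point theorem produces a local strong solution. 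Because the system is linear and the background state $(\widetilde{\rho},\widetilde{\bfu})$ is defined on all of $[0,T]$, the construction restarts from $(\rho(T^*),\bfu(T^*))\in H^1_x\times H^1_{0,x}$; by absolute continuity of $\int(1+\|\bff\|_{L^q_x}^2)$ a uniform positive time-step is admissible, so finitely many steps reach $T$, and the global estimate of the previous paragraph certifies \eqref{mainest} on $[0,T]$. Uniqueness follows from an $L^2_x$-level energy estimate on the difference $(\sigma,\bfv)$ of two solutions: the terms $\grad(\sigma p'(\widetilde{\rho}))$ and $\diver(\widetilde{\rho}\bfv)$ are each absorbed into a fraction of $c\|\grad\bfv\|_{L^2_x}^2$, leaving $\frac{d}{dt}(\|\sigma\|_{L^2_x}^2+\|\bfv\|_{L^2_x}^2)\le C(\|\sigma\|_{L^2_x}^2+\|\bfv\|_{L^2_x}^2)$ and hence $\sigma\equiv\bfv\equiv0$ from zero initial data.

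The main obstacle is the circularity in the a priori estimates: controlling $\|\grad\rho\|_{L^2_x}$ via transport requires $\bfu\in H^2_x$, which is extracted from \eqref{l2} only once $\Pt\bfu\in L^2_x$ is in hand, while the momentum estimate feeds $\|\rho\|_{H^1_x}$ (through the pressure gradient) and $\|\rho\|_{L^6_x}\|\bff\|_{L^3_x}$ back in — so the two estimates must be arranged to close simultaneously, with the weight $K$ and the integrability afforded by $q\ge 3$ doing the essential work. A secondary technical point is to ensure the parabolic and transport estimates used in the construction deliver precisely the spaces in \eqref{mainest} with constants independent of the position of the time-step, so that the patching in time is legitimate.
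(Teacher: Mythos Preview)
Your proposal is correct and proceeds along the same general lines as the paper --- energy estimates at the $H^1$ level for $\rho$, a higher-order velocity estimate obtained by testing the momentum equation against $\Pt\bfu$, Lam\'e regularity to recover $\bfu\in H^2_x$, and Gr\"onwall to close. The paper organizes the higher-order step slightly differently: rather than multiplying by $\Pt\bfu$ and then invoking Lam\'e regularity separately, it multiplies \eqref{l2} by $\Pt\bfu-\e\,\diver\mathbb{S}(\grad\bfu)$ (a trick due to Beir\~ao da Veiga), which produces $\|\Pt\bfu\|_{L^2_x}^2$ and $\|\diver\mathbb{S}(\grad\bfu)\|_{L^2_x}^2$ simultaneously after a judicious choice of $\e$ in terms of $m,M$. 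This is a cosmetic difference; the resulting bound is the same, and your weighting device with $K$ plays the same role as the paper's choice of $\e$ in absorbing the $\e\|\grad\diver\bfu\|_{L^2_x}^2$ term that the $\grad\rho$ estimate generates.

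The one genuinely different choice is in the construction. The paper follows the Feireisl--Novotn\'y--Petzeltov\'a template: it solves the transport equation for $\rho=\rho[\bfu]$ along characteristics of $\widetilde{\bfu}$, projects the momentum equation onto a finite-dimensional Galerkin space $X_n$, and obtains a fixed point $\bfu_n$ by Schauder (via Arzel\`a--Ascoli for compactness); the uniform-in-$n$ a priori estimates then allow passage to the limit. You instead work directly in the target function spaces, decouple by freezing $\bar\rho$, appeal to maximal parabolic regularity for the momentum equation, and close with a Banach contraction and a time-stepping argument. Both routes are standard; yours avoids the Galerkin machinery at the cost of having to invoke maximal regularity for linear parabolic systems with variable (but smooth) coefficients, and of carrying out the continuation-in-time explicitly, whereas the paper's route trades these for a compactness argument and a weak-limit passage. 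Your explicit uniqueness argument at the $L^2$ level is a point the paper leaves implicit.
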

The proof of \cref{mainthm} is given in the Appendix. See \cite{gl87, neu98} for alternate existence results with different assumptions on the data.
\begin{remark}
Unlike the nonlinear system \eqref{NS_01}, there is no reason that the density $\rho$ of the linearized system should be nonnegative. Since $\rho$ typically represents a perturbation about $\widetilde{\rho}$, this is not unexpected.
\end{remark}
\begin{remark}
The smoothness requirement of $(\widetilde{\rho},\widetilde{\bfu})$ is chosen for simplicity and can be considerably weakened. For instance, it is sufficient to consider $(\widetilde{\rho},\widetilde{\bfu})$ having the regularity specified in Propositions \ref{prop1} and \ref{prop2}. For the same reason the pressure regularity may be reduced to $p\in W^{2,3}_{loc}(0,\infty)$. Note the existence of such regular solutions to the nonlinear system for general forcings and large initial data is unknown.
\end{remark}

\subsection{Optimal control problem}
We now describe the optimal control problem addressed in this paper. The objective is to determine control-state triples $(\rho,\bfu,\bfU)$ that minimize the following tracking-type \emph{cost functional},
	\begin{equation}\label{cost}
		\cost(\rho,\bfu,\bfU) = \frac{1}{2}\int_0^T\!\left(\|\bfu-\bfu_d\|^2_{L^2_x}+\|\rho-\rho_d\|^2_{L^2_x}
		+\|\bfU\|^2_{L^2_x}\right)\dt,
	\end{equation}
where desired target states are denoted by $\rho_d\in  L^2_t(L^2_x)$ and $\bfu_d\in L^2_t(L^2_x)$. 

In addition, we specify a \emph{state constraint} of the form 
	\begin{equation}\label{state}
	F(\rho,\bfu)\in W,
	\end{equation}
where the mapping $F: L^2_t(L^2_x)\times L^2_t(H^1_{0,x})\rightarrow X$ is assumed to be continuously Fr{\'e}chet differentiable, $X$ is a Banach space with strictly convex dual $X^*$, and $W\subset X$ is a nonempty closed and convex subset. 	
	
We denote by $\overline{B}_R(0)$ the closed ball of $L^2_x$,
\[\overline{B}_R(0) := \{u\in L^2(\Om;\mathbb{R}^3)\,:\, \|u\|_{L^2_x}\le R\},\] 
where $R>0$ is fixed. The set of controls is taken as follows,
	\begin{equation*}
	\adcont := L^2(0,T;\overline{B}_R(0)).
	\end{equation*}
	
\begin{definition}\label{defall}
The admissible class $\adall$ is defined as the set of state-control triples $(\rho,\bfu,\bfU)$, with $\bfU\in\adcont$, solving the linearized compressible Navier-Stokes system \eqref{NSlin} in the sense of \cref{mainthm}, and satisfying the state constraint \eqref{state}.
\end{definition}
The first order necessary conditions in the form of a Pontryagin maximum principle will be obtained for the following optimal control problem:
	\begin{equation}\label{ptag}\tag{P1}\begin{aligned}
		&Minimize\,\,\,\cost(\rho,\bfu,\bfU)\,\,{over\,\, all}\,\,(\rho,\bfu,\bfU)\in\adall
	\end{aligned}\end{equation}
where it is understood that the admissible class includes the PDE contraint, \cref{NSlin} and the state constraint $F(\rho,\bfu)\in W$.
\begin{definition}
A solution of Problem \eqref{ptag} (provided it exists) is called an optimal solution and denoted $(\rho^*,\bfu^*,\bfU^*)$. The control $\bfU^*$ is called an optimal control.
\end{definition}
\begin{remark}
Since the state $(\rho,\bfu)$ is determined uniquely by the control $\bfU$ through the governing PDE, i.e. $\rho=\rho[\bfU]$ and $\bfu = \bfu[\bfU]$, the cost $\cost$ may be considered as a function of the control only.
\end{remark}
Optimal control theory of incompressible viscous flow has an extensive literature, see for instance Fursikov \cite{f00}, Gunzburger \cite{g03}, Lions \cite{lions71}, Sritharan \cite{s98}, and the references therein. In this paper we extend the techniques developed in \cite{fs94, f99, ww03} for incompressible viscous flow to the linearized compressible case.

Rigorous studies of optimal control problems for compressible Navier-Stokes are less numerous than their incompressible counterpart. Existing mathematical results tend to focus on problems with a number of simplifications including working with the linearized rather than nonlinear system, or possibly employing some non-physical assumptions in order to make problems tractable.

An optimal control problem for linearized compressible Navier-Stokes flows is considered in \cite{cr13}. The controls act on the boundary of a two-dimensional inlet-outlet domain and the existence and necessary conditions are established. Existence of optimal controls for the nonlinear compressible system are established in \cite{dms17}, where the authors use a stronger cost functional and weak-strong uniqueness result to ensure the uniqueness of the state variables. In \cite{amasova09} an optimal control problem is formulated for a Navier-Stokes-Fourier system in one dimension for an inlet-outlet domain in Lagrangian coordinates. For results on null and approximate controllability, and stabilizability, see \cite{crr12,fgip04,mrr17} and the references therein.

Computational implementation of optimality conditions have been addressed in \cite{cghuu}, motivated by problems in aeroacoustics, and \cite{jpm98} for problems in optimal design.

An advantage to using the linearized version of the compressible flow equations is that more practical cost functionals may be chosen, while at the same time ensuring well-posedness of the governing equations in a strong enough sense. Strong solutions are known to exist for the nonlinear system
\eqref{NS_01} (cf. \cite{ck06, mn80, valli83}), though the required regularity of the forcing tends to be much stronger than is often required in control-theoretic problems. Since discontinuous controls are relevant in practice, we seek to avoid such regularity requirements. 

The lack of suitably strong solutions of the nonlinear system \eqref{NS_01} introduces other difficulties relevant to solvability of the adjoint and linearized equations, used in obtaining the necessary conditions. A number of authors 
(cf. \cite{bella15, cr13}) have observed that often more regularity is needed for the base state of the linearization. Collis et al. \cite{cghuu} similarly observe that ``...the solution of the linearized state equation is less regular than the state . . . about which the linearization is
done.'' In order to avoid such issues, we ensure that the base state $(\widetilde{\rho},\widetilde{\bfu})$ is sufficiently regular.

In order to derive the necessary conditions for Problem \eqref{ptag}, we make use of Ekeland's variational principle, due to I. Ekeland \cite{eke74}. This tool provides the existence of minimizers for rather general cost functionals, and is useful when precise extrema are not required, or do not exist. In the context of incompressible fluids, the variational principle has been used by many authors to obtain the necessary conditions, cf. \cite{fs94, fs98,ww03}. The reference \cite{fs98} in particular deals with incompressible flow with state constraints.

We anticipate relevance of this paper to applications including aerodynamic design \cite{bb97, jpm98}, and noise control \cite{cgh2001,oss16}, as well as to the general theory of optimal control of compressible fluids.

\section{{The necessary conditions and main theorem}}
In this section we formally derive the Pontryagin maximum principle for Problem \eqref{ptag}. In order for the state evolution to be described explicitly, we divide the momentum equation by $\widetilde{\rho}$, which we recall is assumed to be strictly positive.

Begin by defining 
	\begin{equation*}\begin{aligned}
		-\mathcal{N}_1(\rho,\bfu) &=  \diver(\rho\widetilde{\bfu})+\diver(\widetilde{\rho}\bfu),\\
		-\mathcal{N}_2(\rho,\bfu,\bfU) &=\bfu\cdot\grad\widetilde{\bfu}+\widetilde{\bfu}\cdot\grad\bfu
		+(\widetilde{\rho})^{-1}\grad(\rho p'(\widetilde{\rho}))\\ 
		&\quad +(\widetilde{\rho})^{-1}
		\left[\rho(\Pt\widetilde{\bfu}+\widetilde{\bfu}\cdot\grad\widetilde{\bfu})-\diver\,\mathbb{S}(\grad\bfu)-\rho\bff-\bfU\right].
	\end{aligned}\end{equation*}
The linearized system \eqref{NSlin} may be concisely written
	\begin{equation*}\left\{\begin{aligned}
		\Pt\rho &= \mathcal{N}_1(\rho,\bfu)\\
		\Pt\bfu &= \mathcal{N}_2(\rho,\bfu,\bfU).
	\end{aligned}\right.\end{equation*}
We define the \emph{augmented cost functional}, $\widehat{\mathcal{J}},$ as 
	\begin{equation*}\begin{aligned}
		\widehat{\mathcal{J}}(\rho,\bfu,\sigma,\bfxi,\bfU) := \lambda\cost(\rho,\bfu,\bfU)
		&+\int_0^T\left\langle\sigma,\Pt\rho-\mathcal{N}_1(\rho,\bfu)\right\rangle\dt\\
		&+\int_0^T\left\langle\bfxi,\Pt\bfu-\mathcal{N}_2(\rho,\bfu,\bfU)\right\rangle\dt
		+ \zeta\cdot d_W(F(\rho,\bfu)),
	\end{aligned}\end{equation*}
where $\sigma$ and $\bfxi$ denote adjoint variables to $\rho$ and $\bfu$ respectively, $\mathrm{d}_W$ denotes distance to the set $W$ in the norm of $X$, $\lambda$ is a multiplier for the cost functional, and $\zeta$ is a multiplier for the state constraint.

Let $(\rho^*,\bfu^*,\bfU^*)$ be an optimal triple. Formally, the adjoint equations are derived by differentiating $\widehat{\cost}$ in the G{\^a}teaux sense with respect to the state-control variables and taking adjoints,
	\begin{equation}\label{adjeq}\begin{aligned}
		-\Pt\sigma + \lambda\partial_\rho{\mathcal{J}} - [\partial_\rho\mathcal{N}_1]^*\sigma - [\partial_\rho\mathcal{N}_2]^*\boldsymbol{\xi} &= -\zeta[F_\rho(\rho^*,\bfu^*)]^*\eta,\\
		-\Pt\bfxi + \lambda\partial_\bfu{\mathcal{J}} - [\partial_\bfu\mathcal{N}_1]^*\sigma - [\partial_\bfu\mathcal{N}_2]^*\boldsymbol{\xi} &= -\zeta[F_\bfu(\rho^*,\bfu^*)]^*\eta,\\
		\lambda\partial_\bfU{\mathcal{J}} - [\partial_\bfU\mathcal{N}_2]^*\boldsymbol{\xi} &= 0,
	\end{aligned}\end{equation}
where $\eta\in X^*$ is in the subdifferential of $\mathrm{d}_W$ at the point $F(\rho^*,\bfu^*)$, i.e. $\eta\in \partial\mathrm{d}_W(F(\rho^*,\bfu^*))$.
\begin{remark}
Differentiating $\widehat{\cost}$ with respect to $\sigma$ and $\boldsymbol{\xi}$ recovers the original system.
\end{remark}
From \eqref{adjeq} we obtain that $(\sigma,\bfxi)$ should satisfy the following adjoint system on $Q_T$,
	\begin{equation}\label{adj0}\begin{aligned}
		&-\Pt\sigma-\widetilde{\bfu}\cdot\grad\sigma
		=(\widetilde{\rho})^{-1}\bfxi\cdot[-(\Pt\widetilde{\bfu}+\widetilde{\bfu}\cdot\grad\widetilde{\bfu})+\bff] +p'(\widetilde{\rho})\diver\left((\widetilde{\rho})^{-1}\bfxi\right)\\
		&\hspace{80pt}+\lambda(\rho_d-\rho^*)-\zeta[F_\rho(\rho^*,\bfu^*)]^*\eta,\\
		&-\Pt\bfxi-\diver(\bfxi\otimes\widetilde{\bfu})+\bfxi\cdot(\grad\widetilde{\bfu})^T =\widetilde{\rho}\grad\sigma+\diver\mathbb{S}(\grad((\widetilde{\rho})^{-1}\bfxi))+\lambda(\bfu_d-\bfu^*)-\zeta[F_\bfu(\rho^*,\bfu^*)]^*\eta,\\
		&\bfxi\big|_\Gamma = 0,\\
		&\sigma(T,\cdot)=0,\,\,\bfxi(T,\cdot)=0,
	\end{aligned}\end{equation}
and in addition the Pontryagin principle for optimal control holds,
	\begin{equation*}
		\frac{1}{2}\lambda\|\bfU^*(\tau)\|^2_{L^2_x}-\left\langle(\widetilde{\rho})^{-1}(\tau)\bfxi(\tau),\bfU^*(\tau)\right\rangle \le
		\frac{1}{2}\lambda\|{\bf{W}}\|^2_{L^2_x}-\left\langle(\widetilde{\rho})^{-1}(\tau)\bfxi(\tau),{\bf{W}}\right\rangle,
	\end{equation*}
to be satisfied for all ${\bf{W}} \in \overline{B}_R(0)$. Equivalently, this may be written in the Hamiltonian formulation
	\begin{equation*}
		\mathcal{H}(\rho^*(\tau),\bfu^*(\tau),\sigma(\tau),\bfxi(\tau),\bfU^*(\tau),\lambda) 
		= \min_{{\bf{W}}\in L^2_x}\mathcal{H}(\rho^*(\tau),\bfu^*(\tau),\sigma(\tau),\bfxi(\tau),{\bf{W}},\lambda),
	\end{equation*}
where we define the \emph{Hamiltonian}
	\begin{equation*}
		\mathcal{H}(\rho,\bfu,\sigma,\bfxi,\bfU,\lambda) := \lambda\mathcal{L}(\rho,\bfu,\bfU)+\left\langle\sigma, \mathcal{N}_1(\rho,\bfu)\right\rangle
		+\left\langle\bfxi,\mathcal{N}_2(\rho,\bfu,\bfU)\right\rangle,
	\end{equation*}
and corresponding \emph{Lagrangian}
	\[\mathcal{L}(\rho,\bfu,\bfU):=\frac{1}{2}(\|\bfu-\bfu_d\|^2_{L^2_x}+\|\rho-\rho_d\|^2_{L^2_x}+\|\bfU\|^2_{L^2_x}).\]
The following theorem states the main result of this paper regarding the necessary conditions for Problem \eqref{ptag}.
\begin{theorem}\label{mthm}
	Let $(\rho^*,\bfu^*,\bfU^*)$ be an optimal solution of Problem \eqref{ptag}. Then there exists $\lambda\in\mathbb{R}$, $\eta\in X^*$, and a weak solution 
	$(\sigma,\bfxi)$ of the adjoint system \eqref{adj0} in the sense of distributions, and furthermore for all ${\bf{W}}\in \overline{B}_R(0)$,
	\begin{equation}\label{t1}
	\frac{1}{2}\lambda\|\bfU^*\|^2_{L^2_t L^2_x}-\left\langle(\widetilde{\rho})^{-1}\bfxi,\bfU^*\right\rangle_{L^2_t L^2_x}
	\le \frac{1}{2}\lambda\|{\bf{W}}\|^2_{L^2_t L^2_x}-\left\langle(\widetilde{\rho})^{-1}\bfxi,{\bf{W}}\right\rangle_{L^2_t L^2_x},
	\end{equation}	
and \begin{equation}\label{cone1}
		\langle \eta,w-F(\rho^*,\bfu^*) \rangle \le 0\quad\forall w\in W.
	\end{equation}
where $\eta\in \partial\mathrm{d}_W(F(\rho^*,\bfu^*))$, $\partial d_W$ denoting the convex subdifferential of the distance function to the set $W$.
\end{theorem}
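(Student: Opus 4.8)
The plan is to derive the Pontryagin principle via a penalization argument built on Ekeland's variational principle, following the scheme of \cite{fs94,fs98,ww03} adapted to the linearized compressible system. First I would introduce, for $\e>0$, a penalized cost functional on the control set $\adcont$ alone (recalling that the state is a function of the control),
\[
\cost_\e(\bfU) := \left[\left(\left(\cost(\bfU)-\cost(\bfU^*)+\e\right)^+\right)^2 + d_W\big(F(\rho[\bfU],\bfu[\bfU])\big)^2\right]^{1/2}.
\]
One checks $\cost_\e>0$ everywhere (since $(\rho^*,\bfu^*,\bfU^*)$ is optimal and feasible), $\cost_\e(\bfU^*)\le\e$, so $\bfU^*$ is an $\e$-minimizer. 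Equip $\adcont$ with the $L^2_tL^2_x$ metric, under which it is a complete metric space; Ekeland's principle then yields $\bfU^\e\in\adcont$ with $\|\bfU^\e-\bfU^*\|_{L^2_tL^2_x}\le\sqrt\e$ and such that $\bfU^\e$ strictly minimizes $\bfU\mapsto\cost_\e(\bfU)+\sqrt\e\,\|\bfU-\bfU^\e\|_{L^2_tL^2_x}$ over $\adcont$.

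Next I would compute the variational inequality satisfied by this perturbed minimizer using a spike (needle) variation $\bfU_\theta = \bfU^\e + \theta(\bfW-\bfU^\e)$ for $\bfW\in\adcont$ and $\theta\in(0,1)$, which stays in $\adcont$ by convexity of $\overline B_R(0)$. The map $\bfU\mapsto(\rho[\bfU],\bfu[\bfU])$ is affine and continuous by \cref{mainthm} (the linearized system is linear in $(\rho,\bfu,\bfU)$), so the directional derivative $(\varrho,\bfv)$ of the state in direction $\bfW-\bfU^\e$ solves the linearized system with control $\bfW-\bfU^\e$ and zero initial data; in fact $(\rho[\bfU_\theta],\bfu[\bfU_\theta]) = (\rho[\bfU^\e],\bfu[\bfU^\e]) + \theta(\varrho,\bfv)$ exactly. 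Dividing the Ekeland inequality by $\theta$ and letting $\theta\downarrow0$, using Fréchet differentiability of $F$ and of $\cost$ and the chain rule for $d_W$ (whose subdifferential at a point is nonempty, of norm $\le1$, and characterized by \eqref{cone1}-type inequalities), I obtain
\[
\lambda^\e\Big(\big\langle\bfu^\e-\bfu_d,\bfv\big\rangle_{L^2_tL^2_x}+\big\langle\rho^\e-\rho_d,\varrho\big\rangle_{L^2_tL^2_x}+\big\langle\bfU^\e,\bfW-\bfU^\e\big\rangle_{L^2_tL^2_x}\Big) + \big\langle\zeta^\e\eta^\e,\ F_\rho\varrho+F_\bfu\bfv\big\rangle_X \ge -\sqrt\e\,\|\bfW-\bfU^\e\|_{L^2_tL^2_x},
\]
where $(\lambda^\e,\zeta^\e)=(\cost_\e(\bfU^\e))^{-1}((\cost(\bfU^\e)-\cost(\bfU^*)+\e)^+, d_W(F(\rho^\e,\bfu^\e)))$ satisfies $(\lambda^\e)^2+(\zeta^\e)^2=1$, $\lambda^\e\ge0$, $\zeta^\e\ge0$, and $\eta^\e\in\partial d_W(F(\rho^\e,\bfu^\e))$.

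To eliminate the linearized state $(\varrho,\bfv)$ I would introduce the adjoint state $(\sigma^\e,\bfxi^\e)$ as the weak solution of \eqref{adj0} with $(\rho^*,\bfu^*,\lambda,\zeta,\eta)$ replaced by $(\rho^\e,\bfu^\e,\lambda^\e,\zeta^\e,\eta^\e)$; its existence and the requisite regularity (enough to integrate by parts against $(\varrho,\bfv)$, which lie in the strong-solution class of \cref{mainthm}) follow from the linear theory exactly as in \cref{mainthm} applied to the backward-in-time adjoint system, whose structure is the formal adjoint of the linearized operator. The duality pairing — multiply the linearized equations for $(\varrho,\bfv)$ by $(\sigma^\e,\bfxi^\e)$, integrate over $Q_T$, integrate by parts in space and time using the boundary conditions $\bfv|_{\Gamma_T}=0=\bfxi^\e|_{\Gamma_T}$ and the terminal conditions $\sigma^\e(T)=0=\bfxi^\e(T)$ against the initial conditions $\varrho(0)=0=\bfv(0)$ — converts the state-dependent terms into $\langle(\widetilde\rho)^{-1}\bfxi^\e,\bfW-\bfU^\e\rangle_{L^2_tL^2_x}$, reducing the inequality to
\[
\tfrac12\lambda^\e\|\bfU^\e\|^2_{L^2_tL^2_x}-\big\langle(\widetilde\rho)^{-1}\bfxi^\e,\bfU^\e\big\rangle_{L^2_tL^2_x} \le \tfrac12\lambda^\e\|\bfW\|^2_{L^2_tL^2_x}-\big\langle(\widetilde\rho)^{-1}\bfxi^\e,\bfW\big\rangle_{L^2_tL^2_x} + \sqrt\e\,\big(\|\bfW-\bfU^\e\|_{L^2_tL^2_x}+\lambda^\e R\|\bfW-\bfU^\e\|_{L^2_tL^2_x}\big),
\]
after absorbing the $\langle\bfU^\e,\bfW-\bfU^\e\rangle$ term by the elementary identity $\langle a,b-a\rangle=\tfrac12(\|b\|^2-\|a\|^2)-\tfrac12\|b-a\|^2$ and discarding a negative square.

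Finally I would pass to the limit $\e\downarrow0$. From $(\lambda^\e)^2+(\zeta^\e)^2=1$, extract a subsequence with $\lambda^\e\to\lambda\ge0$ and $\zeta^\e\to\zeta\ge0$, $\lambda^2+\zeta^2=1$ (so $(\lambda,\eta)$ is nontrivial). Since $\bfU^\e\to\bfU^*$ strongly in $L^2_tL^2_x$, the corresponding states converge strongly in the \cref{mainthm} norms (linear continuous dependence), so $F(\rho^\e,\bfu^\e)\to F(\rho^*,\bfu^*)$ by continuity of $F$; the $\eta^\e$ are bounded in $X^*$ (norm $\le1$) so a subsequence converges weakly-$*$ to some $\eta\in X^*$, and by upper semicontinuity of the subdifferential of the convex function $d_W$ together with strict convexity of $X^*$ one identifies $\eta\in\partial d_W(F(\rho^*,\bfu^*))$, which by the standard characterization of the subdifferential of a distance function gives exactly \eqref{cone1}. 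Stability of the linear adjoint system yields $(\sigma^\e,\bfxi^\e)\to(\sigma,\bfxi)$, a weak solution of \eqref{adj0} with the limiting multipliers. Passing to the limit in the displayed inequality (the $\sqrt\e$ terms vanish, $\bfW\in\overline B_R(0)$ fixed) produces \eqref{t1}.

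The main obstacle will be the adjoint/duality step: one must verify that the backward adjoint system \eqref{adj0} — which has a transport structure in $\sigma$ and a degenerate-parabolic structure in $\bfxi$ with the variable coefficient $(\widetilde\rho)^{-1}$ inside the stress operator — admits a weak solution with enough regularity and decay to justify the integration by parts against the linearized state $(\varrho,\bfv)$, and that the pairing indeed collapses to the clean control term $\langle(\widetilde\rho)^{-1}\bfxi,\bfW-\bfU^\e\rangle$. This requires care with the terms coming from $(\widetilde\rho)^{-1}\grad(\rho\,p'(\widetilde\rho))$ and $(\widetilde\rho)^{-1}\diver\mathbb S(\grad\bfv)$ when transposed, which is precisely where the smoothness of the base state $(\widetilde\rho,\widetilde\bfu)$ and the bounds $0<m\le\widetilde\rho\le M$ are used. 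A secondary subtlety is the limit identification $\eta\in\partial d_W(F(\rho^*,\bfu^*))$, handled by the strict convexity of $X^*$ (which makes the duality map single-valued and continuous) exactly as in \cite{fs98}.
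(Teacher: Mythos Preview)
Your outline is essentially correct, but it follows a genuinely different route from the paper's proof, and you should be aware of the differences. First, a terminological slip: what you write as a ``spike (needle) variation'' $\bfU_\theta=(1-\theta)\bfU^\e+\theta\bfW$ is not one---it is a \emph{convex} variation. Second, you equip $\adcont$ with the $L^2_tL^2_x$ metric, whereas the paper uses the Ekeland metric $d_E(\bfU,\bfV)=\mathrm{meas}\{t:\bfU(t)\ne\bfV(t)\}$ (\cref{comlem}) and genuine spike variations $\bfU^h$ (\cref{def}), replacing the control by a fixed $\bfW\in\overline B_R(0)$ on a short interval $(\tau-h,\tau)$. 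The paper then needs \cref{convthm} to identify the limit of the difference quotients $(z^h,\bfv^h)$ as the solution of an evolution problem with initial kick $(\widetilde\rho)^{-1}(\tau)(\bfW-\bfU_\e(\tau))$ at time $\tau$; this yields a pointwise-in-$\tau$ inequality \eqref{end}, which must then be integrated over $\tau\in(0,T)$ before passing $\e\to0$ (the integration is forced: $\bfxi_\e\to\bfxi$ is obtained only in $L^2_tL^2_x$ via Aubin--Lions, not pointwise in $\tau$).

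Your route is legitimately shorter here because the control-to-state map is affine and $\overline B_R(0)$ is convex: convex variations give an exact linearization, so you land directly on the integral inequality without \cref{convthm} or the $\tau$-integration, and you in fact obtain \eqref{t1} for all $\bfW\in\adcont$, a slight strengthening. The paper's spike-variation machinery is the classical PMP apparatus that would survive nonconvex control sets and nonlinear dynamics and formally yields a pointwise Hamiltonian condition---but since that extra information is discarded at the $\e\to0$ step anyway, both routes deliver the same integral-type principle. One small point to tighten in your write-up: the differentiability argument should pass through $d_W^2$ rather than $d_W$ itself (cf.\ the paper's \eqref{distdiv}, $D d_W^2(w)=2d_W(w)\eta$ with $\eta\in\partial d_W(w)$), which is what makes the chain rule clean even at points of $\partial W$.
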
 
\begin{definition}
Let $W\subset X$ be a convex subset of a Banach space $X$. The normal cone to $W$ at $\bar{w}$, denoted $N_W(\bar{w})$, is defined as 
	\[N_W(\bar{w}) = \{\eta \in X^* : \langle \eta, w- \bar{w} \rangle_{X^*;X}\le 0,\,\,\forall w\in W\}.\] 
\end{definition}
The inequality \eqref{cone1} therefore has the equivalent characterization $\eta\in N_W(F(\rho^*,\bfu^*))$.

\section{Mathematical preliminaries}
In this section we recall some relevant results to be used in the proof of the Pontryagin maximum principle.
\begin{lemma}
Let $X$ be a Banach space. The distance function $d_W$ to a nonempty subset $W\subset X$, defined as 
	\[d_W(x) = \inf_{w\in W}\|x-w\|_X,\]
is Lipschitz continuous with Lipschitz constant 1.
\end{lemma}
The result of this lemma is standard and also has a straightforward extension to metric spaces.
	\begin{theorem}[Ekeland variational principle]\label{th1}
		Let $(X,d)$ be a complete metric space, and $F: X\rightarrow \mathbb{R}\cup\{+\infty\}$ a lower semi-continuous functional, not identically $+\infty$, and bounded from below. Then for every point $u\in X$ such that
		\begin{equation}\label{emin}
			\inf_X F\le F(u)\le \inf_X F+\e
		\end{equation}
	and every $\lambda>0$, there exists some point $v\in X$ such that
		\begin{equation*}\begin{aligned}
			&F(v)\le F(u),\\
			&d(u,v)\le \lambda,\\
			&\forall w\neq v,\quad F(w)>F(v)-(\e/\lambda)d(v,w).
		\end{aligned}\end{equation*}
	\end{theorem}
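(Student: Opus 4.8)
The plan is to prove \cref{th1} by the classical iterative construction: introduce an auxiliary partial order on $X$ adapted to $F$, build a suitable decreasing sequence, and extract its limit using completeness of $(X,d)$ and lower semicontinuity of $F$.

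Write $\delta:=\e/\lambda>0$. First I would define the relation $w\preceq v$ on $X$ by $F(w)+\delta\,d(w,v)\le F(v)$ and verify it is a partial order: reflexivity is immediate, antisymmetry follows by adding the two defining inequalities (which forces $2\delta\,d(w,v)\le 0$), and transitivity follows from the triangle inequality for $d$. The point $v$ promised by the theorem will be obtained as a $\preceq$-minimal element lying $\preceq$-below $u$.

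Next, construct the sequence. Set $u_0:=u$; given $u_n$, let $S_n:=\{w\in X:\ w\preceq u_n\}$, which is nonempty since $u_n\in S_n$, put $m_n:=\inf_{S_n}F$, which is finite because $m_n\ge\inf_X F>-\infty$, and choose $u_{n+1}\in S_n$ with $F(u_{n+1})\le m_n+2^{-n}$. Transitivity gives $S_{n+1}\subseteq S_n$, so $(m_n)$ is nondecreasing and, since each $u_{n+1}\preceq u_n$, $(F(u_n))$ is nonincreasing. The key estimate is that for every $w\in S_{n+1}$,
\[
\delta\,d(w,u_{n+1})\le F(u_{n+1})-F(w)\le F(u_{n+1})-m_{n+1}\le 2^{-n}.
\]
Since $u_p\in S_{n+1}$ for all $p\ge n+1$ (again by transitivity), this shows $(u_n)$ is Cauchy, hence $u_n\to v$ for some $v\in X$ by completeness.

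Finally, verify the three conclusions. For $p\ge n$ we have $u_p\preceq u_n$, i.e. $F(u_p)+\delta\,d(u_p,u_n)\le F(u_n)$; letting $p\to\infty$, using continuity of $d$ and the fact that $F(u_p)$ decreases to a limit $\ell\ge F(v)$ (by lower semicontinuity), gives $F(v)+\delta\,d(v,u_n)\le F(u_n)$, i.e. $v\preceq u_n$ for all $n$. In particular $v\preceq u_0=u$, which yields $F(v)\le F(u)$ and $\delta\,d(v,u)\le F(u)-F(v)\le F(u)-\inf_X F\le\e$, hence $d(u,v)\le\lambda$. For the last assertion, if $w\preceq v$ for some $w$, then transitivity gives $w\preceq u_n$, i.e. $w\in S_{n+1}$, for all $n$, and the key estimate forces $d(w,u_{n+1})\le 2^{-n}/\delta\to 0$, so $w=v$; thus for every $w\ne v$ the relation $w\preceq v$ fails, which is precisely $F(w)>F(v)-(\e/\lambda)\,d(v,w)$. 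The only delicate point is the limit passage establishing $v\preceq u_n$: lower semicontinuity alone gives an inequality in the correct direction only when combined with the monotonicity of $(F(u_n))$, so these two facts must be used together. Everything else is routine bookkeeping with $\preceq$; the degenerate situation is excluded since $\lambda,\e>0$ (and if $F(u)=\inf_X F$ one may simply take $v=u$).
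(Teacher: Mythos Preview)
Your proof is correct. Note, however, that the paper does not supply its own proof of \cref{th1}: it simply cites Ekeland \cite{eke74} for the original result and \cite{eke79} for an alternative argument attributed to M.~Crandall. Your proposal is essentially the Crandall-style sequential proof from \cite{eke79}: the auxiliary partial order $w\preceq v\iff F(w)+\delta\,d(w,v)\le F(v)$, the iterative choice of near-minimizers $u_{n+1}\in S_n$ with $F(u_{n+1})\le m_n+2^{-n}$, the diameter estimate $\delta\,d(w,u_{n+1})\le 2^{-n}$ on $S_{n+1}$, and the limit passage via lower semicontinuity are all the standard ingredients, and you carry them out cleanly. One tiny remark: the relation $\preceq$ as you define it need not be antisymmetric at points where $F=+\infty$, but this is irrelevant since the entire construction takes place in $\{F\le F(u)\}\subset\{F<+\infty\}$.
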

\Cref{th1} is due to Ekeland \cite{eke74} 
(see \cite{eke79} for an alternative proof credited to M. Crandall).
	\begin{remark}
		A point $u\in X$ such that \eqref{emin} holds is called an $\e$-minimizer of $F$. 
	\end{remark}
Recall as the set of controls we choose
	\begin{equation}\label{controls}
	\adcont := L^2(0,T;\overline{B}_R(0)).
	\end{equation}
The set $\adcont$ is equipped with a suitable metric $d_E$, the Ekeland metric,
	\begin{equation*}
		d_E(u,v)= \mathrm{meas}(\{t\in [0,T]: u(t)\neq v(t)\}),
	\end{equation*}
which is defined for all $u,v\in\adcont$, and where the measure is Lebesgue measure on $\mathbb{R}$.
	\begin{lemma}\label{comlem}
		$(\adcont,d_E)$ is a complete metric space.
	\end{lemma}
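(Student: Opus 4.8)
The plan is to verify the two defining properties of a complete metric space: that $d_E$ is genuinely a metric on $\adcont$, and that every $d_E$-Cauchy sequence converges in $\adcont$. The first part is essentially bookkeeping. Symmetry and nonnegativity of $d_E(u,v)=\mathrm{meas}(\{t: u(t)\neq v(t)\})$ are immediate. For the triangle inequality one observes the set inclusion $\{t: u(t)\neq w(t)\}\subset \{t: u(t)\neq v(t)\}\cup\{t: v(t)\neq w(t)\}$ and applies monotonicity and subadditivity of Lebesgue measure. The one genuinely delicate point in this part is that $d_E(u,v)=0$ should imply $u=v$ \emph{as elements of $\adcont$}, i.e. that two functions agreeing a.e. are identified; this is the usual convention for $L^2(0,T;X)$-type spaces, so I would simply note that elements of $\adcont$ are equivalence classes of functions equal a.e., under which $d_E$ is well defined (it does not depend on the chosen representatives, again by monotonicity of measure).

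For completeness, let $(u_n)$ be a $d_E$-Cauchy sequence in $\adcont$. The strategy is to extract a subsequence that converges pointwise a.e. and then identify the limit. First I would choose a fast subsequence $(u_{n_k})$ with $d_E(u_{n_k},u_{n_{k+1}})\le 2^{-k}$. Setting $E_k = \{t\in[0,T]: u_{n_k}(t)\neq u_{n_{k+1}}(t)\}$, we have $\sum_k \mathrm{meas}(E_k)<\infty$, so by the Borel–Cantelli lemma the set $N=\limsup_k E_k$ has measure zero. For every $t\notin N$ there is an index $K(t)$ beyond which $u_{n_k}(t)$ is \emph{constant} in $k$; call this eventual value $u(t)$, and set $u(t)=0$ for $t\in N$. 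Thus $u_{n_k}(t)\to u(t)$ for a.e. $t$, and moreover for a.e. $t$ the value $u(t)$ equals $u_{n_k}(t)$ for all large $k$, so $u(t)\in\overline{B}_R(0)$ a.e.; measurability of $u$ follows as an a.e.-pointwise limit of measurable functions, and the uniform bound $\|u(t)\|_{L^2_x}\le R$ gives $u\in L^2(0,T;\overline{B}_R(0))=\adcont$.

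It remains to show $d_E(u_n,u)\to 0$, for the whole sequence. For the subsequence this is direct: for $t\notin N$, once $k\ge K(t)$ we have $u_{n_k}(t)=u(t)$, so the set $\{t: u_{n_k}(t)\neq u(t)\}$ is contained (modulo the null set $N$) in $\bigcup_{j\ge k}E_j$, whence $d_E(u_{n_k},u)\le \sum_{j\ge k}2^{-j}\to 0$. Finally, a Cauchy sequence with a convergent subsequence converges to the same limit: given $\e>0$, pick $k$ with $d_E(u_{n_k},u)<\e/2$ and $N_0$ with $d_E(u_n,u_m)<\e/2$ for $n,m\ge N_0$; then for $n\ge \max(N_0,n_k)$ the triangle inequality gives $d_E(u_n,u)<\e$. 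This completes the proof.

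The main obstacle, such as it is, lies in being careful about the a.e.-equivalence-class convention: one must ensure the pointwise "eventually constant" limit is well defined off a null set and that $d_E$ is insensitive to modification on null sets, since otherwise the identity-of-indiscernibles axiom and the final identification of the limit in $\adcont$ would both fail. The rest is a standard Borel–Cantelli argument transplanted from the proof that $L^0$ with the metric of convergence in measure (here, the "discrete" variant $\mathrm{meas}\{u\neq v\}$) is complete.
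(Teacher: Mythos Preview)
Your argument is correct and is precisely the classical Borel--Cantelli proof that the paper defers to by citing Ekeland \cite{eke74} and Lemma~3.15 of \cite{fs94} rather than writing out. The paper gives no self-contained proof of this lemma, so there is nothing further to compare; your treatment of the a.e.-equivalence convention and the closedness of $\overline{B}_R(0)$ (needed so the pointwise limit stays in $\adcont$) matches the remark the paper does include.
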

The proof is classical and may be found in Ekeland \cite{eke74} (see also Lemma 3.15 \cite{fs94}). Note $\overline{B}_R(0)$ is closed, convex, and bounded. In general, \cref{comlem} is not valid if we allow unbounded control sets, i.e. $\adcont = L^2(0,T;U_{\mathrm{ad}})$ with an unbounded subset $U_{\mathrm{ad}}\subset L^2_x$ (see \cite{fs94}, pg. 227).

Furthermore, the following inequality holds,
	\begin{equation}\label{ineqc}
		\|\bfU_n-\bfU\|_{L^2_t(L^2_x)}\le 2R\hspace{1.25pt}[d_E(\bfU_n,\bfU)]^{1/2},
	\end{equation}
and so $\bfU_n\rightarrow \bfU$ strongly in $L^2_t(L^2_x)$ whenever $d_E(\bfU_n,\bfU)\rightarrow 0$.

The following variations are known as spike (needle) variations.
	\begin{definition}\label{def}
	 Fix $\tau\in(0,T)$. Let ${\bf{W}}\in \overline{B}_R(0)$ be arbitrary, and 
		let $h$ be chosen such that $0<h<\tau$. The spike variation $\bfU_{\tau,h,\bfW}$ of a control $\bfU\in\adcont$ is defined by 
			\begin{equation}\label{spike}
				\bfU_{\tau,h,\bfW}(t)=\left\{\begin{aligned}
				&\bfW,\hspace{18pt}\mathrm{if}\,\,t\in[0,T]\cap(\tau-h,\tau)\\
				&\bfU(t),\quad\mathrm{else}.
				\end{aligned}\right.\end{equation}
			For brevity, we denote the spike variation as $\bfU^h$, in which case $\tau$ and $\bfW$ are assumed fixed.
	\end{definition}
It is clear that $\bfU^h\in\mathcal{U}_{\mathrm{ad}}$ and $d_E(\bfU^h,\bfU)=h$.

The following lemma ensures the continuous dependence of solutions of \eqref{NSlin} on the controls $\bfU$.
	\begin{lemma}\label{nearby}
		Let $\bfU_n,\bfU\in \adcont$, and suppose $d_E(\bfU_n,\bfU)\rightarrow 0$ as $n\rightarrow\infty$. Let $(\rho_n,\bfu_n)$ be the solution of
		\eqref{NSlin} corresponding to the control $\bfU_n$, and $(\rho,\bfu)$ the solution corresponding to the control $\bfU$, 
		both emanating from the same initial data $(\rho_0,\bfu_0)$. Then 
		$(\rho_n,\bfu_n)\rightarrow (\rho,\bfu)$ strongly in the topology determined by the estimates \eqref{mainest} of 
		\cref{mainthm}. In particular, it holds at least that
			\begin{equation}\label{xx0}\begin{aligned}
				&\rho_n\rightarrow \rho\,\,\mathrm{strongly\,in}\,\, L^2_t(L^2_x),
				&\bfu_n\rightarrow \bfu\,\,\mathrm{strongly\,in}\,\, L^2_t(H^1_{0,x}).
			\end{aligned}\end{equation}
	\end{lemma}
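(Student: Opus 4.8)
The plan is to prove \cref{nearby} by a direct energy estimate for the difference of two solutions, exploiting the linearity of \eqref{NSlin} in $(\rho,\bfu)$. Writing $\bar\rho = \rho_n-\rho$, $\bar\bfu = \bfu_n-\bfu$, and $\bar\bfU = \bfU_n-\bfU$, the pair $(\bar\rho,\bar\bfu)$ solves the same linearized system \eqref{NSlin} but with zero initial data and with the control $\bar\bfU$ appearing as the only forcing term, i.e.
\begin{equation*}\begin{aligned}
\Pt\bar\rho + \diver(\bar\rho\widetilde\bfu) + \diver(\widetilde\rho\bar\bfu) &= 0,\\
\Pt\bar\bfu + L(\bar\rho,\bar\bfu) &= (\widetilde\rho)^{-1}[\diver\,\mathbb{S}(\grad\bar\bfu) + \bar\rho\bff + \bar\bfU],
\end{aligned}\end{equation*}
subject to $\bar\bfu|_{\Gamma_T} = 0$ and $\bar\rho(0,\cdot) = 0$, $\bar\bfu(0,\cdot) = 0$. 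By \cref{mainthm} applied with control $\bar\bfU \in L^2_t(L^2_x)$ and zero data, this difference enjoys the regularity \eqref{mainest}, so all the manipulations below are justified. Since $d_E(\bfU_n,\bfU)\to 0$, the inequality \eqref{ineqc} gives $\bar\bfU \to 0$ strongly in $L^2_t(L^2_x)$, which will be the driving smallness.

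First I would test the density equation against $\bar\rho$ and the momentum equation against $\widetilde\rho\,\bar\bfu$ (or equivalently combine them so the pressure-type coupling terms $(\widetilde\rho)^{-1}\grad(\bar\rho p'(\widetilde\rho))$ and $\widetilde\rho\grad$-type terms cancel against $\langle\grad(\bar\rho p'(\widetilde\rho)),\bar\bfu\rangle$ after integration by parts, using the no-slip condition to kill boundary terms). This is the standard symmetrization used in compressible linearized energy estimates. The diffusive term yields the good sign $\mu\|\grad\bar\bfu\|_{L^2_x}^2 + (\mu+\lambda)\|\diver\bar\bfu\|_{L^2_x}^2$, which under the hypothesis $4\mu+3\lambda>0$ (cf. \cref{lame}) controls $\|\grad\bar\bfu\|_{L^2_x}^2$ from below up to a constant. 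All remaining terms — those involving $\grad\widetilde\bfu$, $\Pt\widetilde\bfu$, $\grad\widetilde\rho$, $\bff$, and the commutator-type contributions from $\diver(\bar\rho\widetilde\bfu)$ — are lower order and, using the uniform bounds $0<m\le\widetilde\rho\le M$ together with the smoothness (hence $L^\infty_{t,x}$ bounds) of $\widetilde\rho$, $\widetilde\bfu$ and their derivatives, are absorbed by Cauchy–Schwarz and Young's inequality into $\|\bar\rho\|_{L^2_x}^2 + \tfrac{\mu}{2}\|\grad\bar\bfu\|_{L^2_x}^2$. The term $\langle\bar\rho\bff,\bar\bfu\rangle$ needs a little care because $\bff\in L^2_t(L^q_x)$ only: estimate it as $\lesssim \|\bff\|_{L^q_x}\|\bar\rho\|_{L^2_x}\|\bar\bfu\|_{L^{q'}_x}$ and interpolate $\|\bar\bfu\|_{L^{q'}_x}\lesssim\|\bar\bfu\|_{H^1_x}$ (valid since $q'\le 6$ for $q\ge 3$ in three dimensions), then apply Young with the $\grad\bar\bfu$ term, leaving a factor $\|\bff\|_{L^q_x}^2$ that is in $L^1_t$. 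The source term contributes $\langle(\widetilde\rho)^{-1}\bar\bfU,\bar\bfu\rangle \lesssim \|\bar\bfU\|_{L^2_x}^2 + \|\bar\bfu\|_{L^2_x}^2$. Collecting everything produces a differential inequality of Grönwall type,
\begin{equation*}
\frac{\mathrm d}{\dt}\Big(\|\bar\rho(t)\|_{L^2_x}^2 + \|\bar\bfu(t)\|_{L^2_x}^2\Big) + c\,\|\grad\bar\bfu(t)\|_{L^2_x}^2 \le g(t)\Big(\|\bar\rho(t)\|_{L^2_x}^2 + \|\bar\bfu(t)\|_{L^2_x}^2\Big) + C\|\bar\bfU(t)\|_{L^2_x}^2,
\end{equation*}
with $g\in L^1(0,T)$ depending only on the base state and $\|\bff\|_{L^2_t L^q_x}$, and $c>0$.

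Integrating and applying Grönwall's lemma with zero initial data yields
\begin{equation*}
\sup_{t\in[0,T]}\Big(\|\bar\rho(t)\|_{L^2_x}^2 + \|\bar\bfu(t)\|_{L^2_x}^2\Big) + \int_0^T\|\grad\bar\bfu(t)\|_{L^2_x}^2\dt \le C\exp\!\Big(\|g\|_{L^1_t}\Big)\|\bar\bfU\|_{L^2_t L^2_x}^2,
\end{equation*}
which, combined with $\|\bar\bfU\|_{L^2_t L^2_x}\le 2R[d_E(\bfU_n,\bfU)]^{1/2}\to 0$, gives exactly \eqref{xx0}: $\rho_n\to\rho$ in $L^\infty_t(L^2_x)\hookrightarrow L^2_t(L^2_x)$ and $\bfu_n\to\bfu$ in $L^2_t(H^1_{0,x})$. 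To upgrade to convergence in the full topology of \eqref{mainest}, I would then return to the equations: the momentum equation, now tested with $\Pt\bar\bfu$ (or using the elliptic regularity for the Lamé-type operator $\diver\mathbb{S}$ and treating $\Pt\bar\bfu$, the convective terms, $\bar\rho$-terms, and $\bar\bfU$ as an $L^2_t(L^2_x)$ right-hand side) gives control of $\bar\bfu$ in $L^\infty_t(H^1_{0,x})\cap L^2_t(H^2_x)$ and $\Pt\bar\bfu$ in $L^2_t(L^2_x)$ in terms of quantities already shown to vanish; differentiating the transport equation for $\bar\rho$ in space and testing against $\grad\bar\rho$ (a standard transport-estimate in $H^1_x$, again using smoothness of $\widetilde\bfu$) controls $\bar\rho$ in $L^\infty_t(H^1_x)$ and $\Pt\bar\rho$ in $L^\infty_t(L^2_x)$. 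Each of these higher estimates is driven by the lower-order norms already proven to converge to zero, so the whole package tends to zero.

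The main obstacle is organizing the cancellation of the coupling terms between the two equations so that the estimate closes with a Grönwall structure rather than losing a derivative: the term $(\widetilde\rho)^{-1}\grad(\bar\rho p'(\widetilde\rho))$ in the momentum equation is of the same order as $\bar\rho$ in $H^1_x$, which is not controlled by the basic energy; the resolution is the classical one of pairing density against $p'(\widetilde\rho)\bar\rho$ and velocity against $\widetilde\rho\bar\bfu$ (weighting by the base density) so that $\int\grad(p'(\widetilde\rho)\bar\rho)\cdot\bar\bfu + \int p'(\widetilde\rho)\bar\rho\,\diver\bar\bfu$ combine, after integration by parts, into terms involving only $\grad(p'(\widetilde\rho))$, which is bounded. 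A secondary technical point is the sharp handling of $\langle\bar\rho\bff,\bar\bfu\rangle$ under the weak hypothesis $\bff\in L^2_t(L^q_x)$, handled by the Sobolev interpolation noted above; everything else is routine given the uniform bounds on the base state and \cref{mainthm}.
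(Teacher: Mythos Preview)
Your proposal is correct and shares the paper's core observation: by linearity, the difference $(\bar\rho,\bar\bfu)$ solves the same system \eqref{NSlin} with zero initial data and forcing $\bar\bfU$, so continuous dependence reduces to the a~priori estimate of \cref{mainthm} combined with \eqref{ineqc}. The paper's proof, however, stops right there: it simply invokes the estimate already established in \cref{mainthm} (via \cref{apriori} in the Appendix) rather than re-deriving it, which makes the argument about ten lines long. Your hands-on re-derivation is fine but unnecessary once \cref{mainthm} is available; in particular, the ``main obstacle'' you identify --- closing the basic $L^2$ energy without $\|\grad\bar\rho\|_{L^2_x}$ via a symmetrizer weighting --- does not arise in the paper's route because its energy $\mathcal{E}$ in \cref{apriori} carries $|\grad\rho|^2$ from the outset, so the pressure coupling $\grad(\bar\rho\,p'(\widetilde\rho))$ is handled by brute force rather than cancellation. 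Your symmetrization is a legitimate alternative, but the shortest proof is simply to cite \cref{mainthm}.
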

	\begin{proof}
		Define $\tau_n=\rho_n-\rho$ and $\boldsymbol{\omega}_n=\bfu_n-\bfu$. Since the system \eqref{NSlin} is linear, it follows that $\tau_n$ and $\boldsymbol{\omega}_n$ satisfy the system
		\begin{equation}\label{lindiff}\begin{aligned}
			&\Pt\tau_n + \diver(\tau_n\widetilde{\bfu})+\diver(\widetilde{\rho}\boldsymbol{\omega}_n)=0,\quad a.e.\,\,\mathrm{in}\,\,Q_T, \\
			&\widetilde{\rho}(\Pt\boldsymbol{\omega}_n+\boldsymbol{\omega}_n\cdot\grad\widetilde{\bfu}
			+\widetilde{\bfu}\cdot\grad\boldsymbol{\omega}_n)
		+\grad(\tau_n p'(\widetilde{\rho}))\\ 
		&\qquad= -\tau_n(\Pt\widetilde{\bfu}+\widetilde{\bfu}\cdot\grad\widetilde{\bfu})
		+\diver\,\mathbb{S}(\grad\boldsymbol{\omega}_n)+\tau_n\bff+\bfU_n-\bfU,\quad a.e.\,\,\mathrm{in}\,\,Q_T\\
			&\boldsymbol{\omega}_n\big|_{\Gamma_T}=0,\\
			&\tau_n(0,\cdot)=0,\,\,\boldsymbol{\omega}_n(0,\cdot)=0.
		\end{aligned}\end{equation}
		The structure of \eqref{lindiff} is therefore the same as the linearized system \eqref{NSlin} and so the same estimates \eqref{mainest} from \cref{mainthm} hold. We deduce that
			\begin{equation}\label{xx1}\begin{aligned}
				\int_\Om\!\mathcal{E}(\tau_n,\boldsymbol{\omega}_n)(t)\,\dx+\int_0^t\int_\Om\!\left(|\Pt\boldsymbol{\omega}_n|^2+|\grad^2\boldsymbol{\omega}_n|^2\right)\dx\mathrm{d}s
				\le C(\e,\Omega,\widetilde{\rho},\widetilde{\bfu})\|\bfU_n-\bfU\|_{L^2_t(L^2_x)}^2,
			\end{aligned}\end{equation}
			where 
			\[\mathcal{E}(\tau_n,\boldsymbol{\omega}_n):= \frac{1}{2}\left(\tau_n^2+|\grad\tau_n|^2+|\boldsymbol{\omega}_n|^2+\mu|\grad\boldsymbol{\omega}_n|^2+(\mu+\lambda)|\diver\boldsymbol{\omega}_n|^2\right).\]
From the inequality \cref{ineqc}, we have $\bfU_n\rightarrow\bfU$ strongly in $L^2_t(L^2_x)$. Therefore $(\tau_n,\boldsymbol{\omega}_n)$ converges to $(0,0)$ strongly in the topologies determined by \eqref{xx1}. In particular, \eqref{xx0} holds.

To obtain the convergence of $\Pt\tau_n$, note that $\Pt\tau_n = -\diver(\tau_n\widetilde{\bfu})-\diver(\widetilde{\rho}\omega_n),$
and the right side of this equality converges to $0$ in $L^\infty_t(L^2_x)$.
	\end{proof}
Next we study limits of the following quantities,
	\begin{equation}\label{zvdef}
		z^h := \frac{\rho^h-\rho}{h},\quad \bfv^h := \frac{\bfu^h-\bfu}{h},
	\end{equation}
where $(\rho,\bfu)$ is the solution of \eqref{NSlin} corresponding to $\bfU$, and $(\rho^h,\bfu^h)$ is the solution corresponding
to the spike variation $\bfU^h$. We show that as $h\rightarrow 0$, the respective limits $z$ and $\bfv$ satisfy an appropriate linearized problem. 

Formally, one may directly write the system satisfied by $(z^h$, $\bfv^h)$, and taking the limit $h\rightarrow 0$ arrive at 
	\begin{equation}\label{bb5}\begin{aligned}
		&\Pt z + \diver(z\widetilde{\bfu})+\diver(\widetilde{\rho}\bfv)=0,\\
		&\Pt\bfv + \bfv\cdot\grad\widetilde{\bfu}+\widetilde{\bfu}\cdot\grad\bfv
		=(\widetilde{\rho})^{-1}[-\grad(z p'(\widetilde{\rho}))-z(\Pt\widetilde{\bfu}+\widetilde{\bfu}\cdot\grad\widetilde{\bfu})]\\
		&\qquad\qquad\qquad\qquad\qquad\,\,\,+(\widetilde{\rho})^{-1}[\diver\mathbb{S}(\grad\bfv)+z\bff]
			+(\widetilde{\rho})^{-1}(\bfW-\bfU)\delta(t-\tau),\\
		&z(0,\cdot)=0,\,\,\bfv(0,\cdot)=0,\\
		&\bfv(t,\cdot)\big|_{\Gamma}=0,
	\end{aligned}\end{equation}
where the singular term involving the Dirac delta corresponds to a jump due to the spike variation. Equivalently, the limiting system \eqref{bb5} may be cast in an abstract semigroup framework, where the singular term instead contributes as an initial datum 
at time $\tau$. These observations are made rigorous in the following theorem. Its proof is based on ideas from \cite{fs94,f99}. 
\begin{theorem}\label{convthm}
Suppose $(\rho,\bfu)$ is a solution of \eqref{NSlin} corresponding to the control $\bfU$, and let $(\rho^h,\bfu^h)$ be a solution corresponding to the spike variation $\bfU^h$, given in \cref{def}. Let $\tau\in (0,T]$ be a left Lebesgue point of the 
function $(\widetilde{\rho})^{-1}(t)(\bfW-\bfU(t))$. Define $z^h$ and $\bfv^h$ as in \eqref{zvdef}. Then $z^h\rightarrow z$
and $\bfv^h\rightarrow \bfv$ uniformly in $\tau\le t\le T$ and strongly in $L^2(\Om)$,
where
	\begin{equation}\label{evo}
	\left(\!\!\!\begin{array}{c}z\\ \bfv\end{array}\!\!\!\right)\!(t)=\left\{\begin{aligned}
	&(0,0)^T,\quad 0\le t< \tau,\\
	&S(t,\tau)\mathcal{F}_0(\tau),\quad \tau\le t\le T,
	\end{aligned}\right.\end{equation}
and 
	\[\mathcal{F}_0(\tau)=\left(\!\!\!\begin{array}{c}0\\ (\widetilde{\rho})^{-1}(\tau)(\bfW-\bfU(\tau))\end{array}\!\!\!\right),\]
and $S(t,\tau)$ is the evolution operator of the system 
	\begin{equation}\begin{aligned}\label{limiteq}
		&\Pt z + \diver(z\widetilde{\bfu})+\diver(\widetilde{\rho}\bfv)=0,\\
		&\Pt\bfv + \bfv\cdot\grad\widetilde{\bfu}+\widetilde{\bfu}\cdot\grad\bfv
		=(\widetilde{\rho})^{-1}[-\grad(z p'(\widetilde{\rho}))-z(\Pt\widetilde{\bfu}+\widetilde{\bfu}\cdot\grad\widetilde{\bfu})]
			+(\widetilde{\rho})^{-1}[\diver\mathbb{S}(\grad\bfv)+z\bff].
	\end{aligned}\end{equation}
\end{theorem}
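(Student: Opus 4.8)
The plan is to establish the convergence $z^h\to z$, $\bfv^h\to\bfv$ by writing the equation satisfied by the differences $(z^h-z,\bfv^h-\bfv)$ on the interval $[\tau,T]$, deriving an energy estimate, and showing the forcing terms go to zero thanks to the left-Lebesgue-point assumption and the continuous dependence of \cref{nearby}. First I would treat the region $0\le t<\tau$: here $\bfU^h$ and $\bfU$ agree (since $\bfU^h$ differs from $\bfU$ only on $(\tau-h,\tau)$... wait, that interval is inside $[0,\tau)$), so actually on $[0,\tau-h]$ the controls agree and uniqueness gives $z^h=\bfv^h=0$; on the short window $(\tau-h,\tau)$ one uses the estimate \eqref{xx1} from \cref{nearby} applied to $\bfU^h$ versus $\bfU$, giving $\|\rho^h-\rho\|^2+\|\bfu^h-\bfu\|^2_{H^1}\lesssim\|\bfU^h-\bfU\|^2_{L^2_tL^2_x}\lesssim h$ on that window, hence $z^h,\bfv^h$ are $O(h^{-1/2})$ there — this will require care but is the standard "the spike has small measure" argument, and in fact one shows $(z^h,\bfv^h)(\tau)$ converges to $\mathcal F_0(\tau)$ by isolating the contribution of the control jump.

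The core computation: for $t\ge\tau$, both $(\rho^h,\bfu^h)$ and $(\rho,\bfu)$ solve \eqref{NSlin} with the \emph{same} control $\bfU$ on $[\tau,T]$ but with different data at time $\tau$; dividing by $h$, the pair $(z^h,\bfv^h)$ solves the homogeneous linearized system \eqref{limiteq} on $[\tau,T]$ with initial datum $(z^h,\bfv^h)(\tau)$ at $t=\tau$. Likewise $(z,\bfv)$ solves \eqref{limiteq} with datum $\mathcal F_0(\tau)$ by definition \eqref{evo}. Subtracting, $(z^h-z,\bfv^h-\bfv)$ solves \eqref{limiteq} with datum $(z^h,\bfv^h)(\tau)-\mathcal F_0(\tau)$, and the energy estimate underlying \cref{mainthm} (the quantity $\mathcal E$ from the proof of \cref{nearby}, applied to the homogeneous problem) gives
\[
\sup_{\tau\le t\le T}\bigl(\|(z^h-z)(t)\|_{L^2_x}^2+\|(\bfv^h-\bfv)(t)\|_{H^1_{0,x}}^2\bigr)\le C\,\bigl\|(z^h,\bfv^h)(\tau)-\mathcal F_0(\tau)\bigr\|_{L^2_x\times H^{?}}^2 .
\]
So the whole theorem reduces to showing $(z^h,\bfv^h)(\tau)\to\mathcal F_0(\tau)=(0,(\widetilde\rho)^{-1}(\tau)(\bfW-\bfU(\tau)))^T$ in $L^2_x$ as $h\to0$.

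The main obstacle is precisely this last step: identifying $\lim_{h\to0}(z^h,\bfv^h)(\tau)$. The idea is that on $(\tau-h,\tau)$ the difference system for $(\rho^h-\rho,\bfu^h-\bfu)$ is driven by $(\widetilde\rho)^{-1}(\bfU^h-\bfU)=(\widetilde\rho)^{-1}(\bfW-\bfU(t))$, which over a window of length $h$ contributes, after dividing by $h$, an increment $\frac1h\int_{\tau-h}^\tau(\widetilde\rho)^{-1}(s)(\bfW-\bfU(s))\,ds\to(\widetilde\rho)^{-1}(\tau)(\bfW-\bfU(\tau))$ to the velocity component (this is where "left Lebesgue point" is used), while the density component and all the lower-order linear terms contribute $O(h)$ and vanish — using Gr\"onwall and the $L^\infty_t$ bounds on $(\widetilde\rho,\widetilde\bfu)$ to control the error between the true $(z^h,\bfv^h)(\tau)$ and this explicit increment. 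I would make this rigorous by integrating the mild/Duhamel formulation over $[\tau-h,\tau]$: write $(z^h,\bfv^h)(\tau)=\frac1h\int_{\tau-h}^\tau S(\tau,s)\mathcal F_0^h(s)\,ds + (\text{terms that are }O(h))$, where $\mathcal F_0^h(s)=(0,(\widetilde\rho)^{-1}(s)(\bfW-\bfU(s)))^T$, and use strong continuity of $S(\tau,\cdot)$ at $s=\tau$ together with the Lebesgue-point property. Finally, the uniform-in-$t$ convergence on $[\tau,T]$ and strong $L^2(\Om)$ convergence follow from the energy estimate above, and the $0\le t<\tau$ part is immediate from uniqueness. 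Once the limit at $t=\tau$ is pinned down, assembling \eqref{evo} is routine.
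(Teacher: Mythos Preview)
Your proposal uses the same core ingredients as the paper --- the Duhamel/evolution-operator representation, strong continuity of $S(t,s)$, and the left Lebesgue point property of $\mathcal F_0$ --- but organizes them differently. You split into two steps: (i) show $(z^h,\bfv^h)(\tau)\to\mathcal F_0(\tau)$, then (ii) propagate this on $[\tau,T]$ via the energy estimate from \cref{mainthm}. The paper instead does it in one stroke: for $t\ge\tau$ it writes, using that $(z^h,\bfv^h)$ has zero initial data and forcing $\mathcal F_h(s)=\tfrac1h\mathbbm{1}_{(\tau-h,\tau)}(s)\mathcal F_0(s)$,
\[
(z^h,\bfv^h)(t)=\int_0^t S(t,s)\mathcal F_h(s)\,\mathrm{d}s=\frac1h\int_{\tau-h}^\tau S(t,s)\mathcal F_0(s)\,\mathrm{d}s,
\]
and compares this directly with $(z,\bfv)(t)=S(t,\tau)\mathcal F_0(\tau)$, obtaining
\[
\eta(t,h)=\frac1h\int_{\tau-h}^\tau\bigl[S(t,s)\mathcal F_0(s)-S(t,\tau)\mathcal F_0(\tau)\bigr]\,\mathrm{d}s,
\]
which is then estimated in $L^2_x\times L^2_x$ uniformly in $t$ by splitting $S(t,s)\mathcal F_0(s)-S(t,\tau)\mathcal F_0(\tau)$ into $S(t,s)(\mathcal F_0(s)-\mathcal F_0(\tau))$ plus $(S(t,s)-S(t,\tau))\mathcal F_0(\tau)$.

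This direct route sidesteps the regularity issue you flagged with your ``$H^{?}$'': your step (ii) would invoke the energy estimate of \cref{mainthm}, which requires velocity data in $H^1_{0,x}$, whereas $\mathcal F_0(\tau)$ has velocity component only in $L^2_x$ (since $\bfW,\bfU(\tau)\in L^2_x$). You would need either a separate $L^2_x\times L^2_x$ energy estimate for the homogeneous system, or to appeal directly to boundedness of $S(t,\tau)$ on $L^2_x\times L^2_x$ --- which is precisely what the paper uses anyway. So your two-step decomposition is correct in principle but introduces an extra regularity wrinkle that the paper's one-shot Duhamel comparison avoids.
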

\begin{proof}
Define a linear operator $\mathcal{A}(z,\bfv)^T=(\mathcal{A}_1(z,\bfv)^T,\mathcal{A}_2(z,\bfv)^T)$ where
	\begin{equation*}\begin{aligned}
	&-\mathcal{A}_1\!\left(\!\!\!\begin{array}{c}z\\ \bfv\end{array}\!\!\!\right) = \diver(z\widetilde{\bfu})+\diver(\widetilde{\rho}\bfv),\\
	&-\mathcal{A}_2\!\left(\!\!\!\begin{array}{c}z\\ \bfv\end{array}\!\!\!\right) = \bfv\cdot\grad\widetilde{\bfu}+\widetilde{\bfu}\cdot\grad\bfv
		+(\widetilde{\rho})^{-1}[z(\Pt\widetilde{\bfu}+\widetilde{\bfu}\cdot\grad\widetilde{\bfu})+\grad(z p'(\widetilde{\rho}))-\diver\mathbb{S}(\grad\bfv)-z\bff].
	\end{aligned}\end{equation*}
We may then concisely write the system for $(z^h,\bfv^h)^T$ as 
	\begin{equation*}\begin{aligned}
		&\frac{\mathrm{d}}{\dt}\!\left(\!\!\!\begin{array}{c}z^h\\ \bfv^h\end{array}\!\!\!\right) 
		= \mathcal{A}\!\left(\!\!\!\begin{array}{c}z^h\\ \bfv^h\end{array}\!\!\!\right)+\mathcal{F}_h(t),\\
		&\left(\!\!\!\begin{array}{c}z^h\\ \bfv^h\end{array}\!\!\!\right)\!(0) = \left(\!\!\!\begin{array}{c}0\\ 0\end{array}\!\!\!\right),
	\end{aligned}\end{equation*}
where 
	\[\mathcal{F}_h(t)=\left(0,\,(\widetilde{\rho})^{-1}(t)
	\frac{1}{h}\mathbbm{1}_{(\tau-h,\tau)}(t)(\bfW-\bfU(t))\right)^T.\]
Similarly, the system \eqref{limiteq} for $(z,\bfv)^T$ over $\tau\le t\le T$ may be written as
	\begin{equation*}\begin{aligned}
		&\frac{\mathrm{d}}{\dt}\!\left(\!\!\!\begin{array}{c}z\\ \bfv\end{array}\!\!\!\right) = \mathcal{A}\!\left(\!\!\!\begin{array}{c}z\\ \bfv\end{array}\!\!\!\right),\\
		&\left(\!\!\!\begin{array}{c}z\\ \bfv\end{array}\!\!\!\right)\!(\tau) 
		= \mathcal{F}_0(\tau),
	\end{aligned}\end{equation*}
while for $0\le t<\tau$ it holds that $(z,\bfv)^T(t)=(0,0)^T$. 

Next, define 
	\[\eta(t,h)=\left(\!\!\!\begin{array}{c}z^h\\ \bfv^h\end{array}\!\!\!\right)\!(t)-\left(\!\!\!\begin{array}{c}z\\ \bfv\end{array}\!\!\!\right)\!(t).\]
Note for any $0\le t<\tau-h$, $\mathcal{F}_h(t)=0$. It follows that for $t<\tau-h$,
	\begin{equation*}\begin{aligned}
		&\frac{\mathrm{d}}{\dt}\eta(t,h) = \mathcal{A}\eta(t,h),\\
		&\eta(0,h) = \left(\!\!\!\begin{array}{c}0\\ 0\end{array}\!\!\!\right).
	\end{aligned}\end{equation*}
Since $\mathcal{A}$ is linear, we deduce $\eta(t,h)=0$ uniformly in $t\in(0,\tau-h)$. Letting $h\rightarrow 0$ we deduce the convergence
for $0\le t< \tau$.

Now suppose $\tau\le t\le T$. By definition of the evolution operator $S(t,s)$,
	\[\left(\!\!\!\begin{array}{c}z^h\\ \bfv^h\end{array}\!\!\!\right)\!(t)=S(t,0)\!\left(\!\!\!\begin{array}{c}0\\ 0\end{array}\!\!\!\right)+\int_0^t\! S(t,s)\mathcal{F}_h(s)\,\mathrm{d}s=\int_0^t\! S(t,s)\mathcal{F}_h(s)\,\mathrm{d}s,\]
and
	\[\left(\!\!\!\begin{array}{c}z\\ \bfv\end{array}\!\!\!\right)\!(t)=S(t,\tau)\mathcal{F}_0(\tau).\]
Therefore
	\begin{equation*}\begin{aligned}
		\eta(t,h)&=\int_0^t\! S(t,s)\mathcal{F}_h(s)\,\mathrm{d}s-S(t,\tau)\mathcal{F}_0(\tau)\\
		&=\frac{1}{h}\int_{\tau-h}^\tau\! S(t,s)\mathcal{F}_0(s)-S(t,\tau)\mathcal{F}_0(\tau)\,\mathrm{d}s,
	\end{aligned}\end{equation*}
and so
	\begin{equation*}\begin{aligned}
		\|\eta(t,h)\|_{L^2_x\times L^2_x}
		&\le \frac{1}{h}\int_{\tau-h}^\tau\!\|S(t,s)\mathcal{F}_0(s)-S(t,\tau)\mathcal{F}_0(\tau)\|_{L^2_x\times L^2_x}\,\mathrm{d}s\\
		&\le \frac{1}{h}\int_{\tau-h}^\tau\!\|S(t,s)(\mathcal{F}_0(s)-\mathcal{F}_0(\tau))\|_{L^2_x\times L^2_x}
			+\|(S(t,s)-S(t,\tau))\mathcal{F}_0(\tau)\|_{L^2_x\times L^2_x}\,\mathrm{d}s\\
		&\le \frac{1}{h}\int_{\tau-h}^{\tau}\!\|S(t,s)\|_{\mathcal{L}(L^2_x\times L^2_x; L^2_x\times L^2_x)}
			\|\mathcal{F}_0(s)-\mathcal{F}_0(\tau)\|_{L^2_x\times L^2_x}\,\mathrm{d}s\\
		&\quad+\frac{1}{h}\int_{\tau-h}^{\tau}\!\|(S(t,s)-S(t,\tau))\mathcal{F}_0(\tau)\|_{L^2_x\times L^2_x}\,\mathrm{d}s.
	\end{aligned}\end{equation*}
As $h\rightarrow 0$, the right hand side of the above inequality converges to zero by virtue of strong continuity of the evolution operator in $L^2_x\times L^2_x$ and the left Lebesgue point property of $\mathcal{F}_0$ at $\tau$.
\end{proof}
Finally, the following theorem provides the existence of weak solutions of the adjoint system. It can be proven similarly to \cref{mainthm}.
\begin{theorem}\label{adjex}
	Let $\rho_d, \bfu_d \in L^2_t(L^2_x)$ be given and suppose $(\rho^*,\bfu^*,\bfU^*)$ is an optimal solution of Problem \cref{ptag}. Let
	$(\widetilde{\rho},\widetilde{\bfu})$ and $\bff$ satisfy the same conditions of \cref{mainthm}. Then there exists a weak solution $(\sigma,\bfxi)$ of the adjoint system \eqref{adj0} (in the sense of distributions), with regularity
	\begin{equation}
		\sigma\in L^\infty_t(L^2_x),
		\quad\bfxi\in L^\infty_t(L^2_x)\cap L^2_t(H^1_{0,x})
	\end{equation}
\end{theorem}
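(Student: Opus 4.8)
The plan is to reduce the backward system \eqref{adj0} to a forward-in-time problem by reversing time, $t\mapsto T-t$. This sends the terminal data $\sigma(T,\cdot)=\bfxi(T,\cdot)=0$ to initial data and preserves the structure of the equations: one is left with a linear coupled system made of a transport equation for the (reversed) adjoint density and a parabolic equation for the (reversed) adjoint velocity, with coefficients built from the smooth base state $(\widetilde{\rho},\widetilde{\bfu})$ and its derivatives, all bounded on $\overline{Q_T}$ and with $0<m\le\widetilde{\rho}\le M$. Since $\widetilde{\bfu}$ vanishes on $\Gamma_T$, the transport equation for the adjoint density is non-characteristic at the boundary and needs no boundary condition, exactly as the continuity equation in \cref{mainthm}; hence the very same approximation scheme used to prove \cref{mainthm} applies. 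The point worth emphasizing is the regularity of the data: by the estimates \eqref{mainest} for the optimal state, $\lambda(\rho_d-\rho^*)$ and $\lambda(\bfu_d-\bfu^*)$ belong to $L^2_t(L^2_x)$, while continuous Fr{\'e}chet differentiability of $F$ and duality give $\zeta[F_\rho(\rho^*,\bfu^*)]^*\eta\in L^2_t(L^2_x)$ and $\zeta[F_\bfu(\rho^*,\bfu^*)]^*\eta\in L^2_t(H^{-1}_x)$, the latter being the dual of $L^2_t(H^1_{0,x})$. These data are only of energy class, which is why one should expect only the energy-level regularity asserted in the statement, not the higher regularity of \cref{mainthm}.

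Next I would construct approximate solutions $(\sigma_\e,\bfxi_\e)$ by adding an artificial viscosity $\e\Delta$, with homogeneous Neumann boundary condition, to the (reversed) adjoint-density equation, so that the system becomes parabolic--parabolic, and solve the regularized system by a Galerkin method, e.g. in eigenfunctions of the Dirichlet Laplacian for $\bfxi_\e$ and of the Neumann Laplacian for $\sigma_\e$. For each fixed $\e>0$ this is a linear system of ordinary differential equations with bounded measurable coefficients and $L^2_t$ forcing, hence solvable on all of $[0,T]$.

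The heart of the argument is a single a priori estimate, uniform in $\e$ and in the Galerkin index. Testing the reversed $\sigma_\e$-equation with $\sigma_\e$ and the reversed $\bfxi_\e$-equation with $\bfxi_\e$ and adding, the viscous term --- after one integration by parts, noting that the viscosity acts on $(\widetilde{\rho})^{-1}\bfxi_\e$ --- yields a dissipation bounded below by a positive multiple of $\|\grad\bfxi_\e\|^2_{L^2_x}$ (using $\mu>0$, $4\mu+3\lambda>0$ and $\widetilde{\rho}\ge m$, cf. \cref{lame}), together with the artificial contribution $\e\|\grad\sigma_\e\|^2_{L^2_x}$. The transport terms $\langle\widetilde{\bfu}\cdot\grad\sigma_\e,\sigma_\e\rangle$ and $\langle\diver(\bfxi_\e\otimes\widetilde{\bfu}),\bfxi_\e\rangle$ integrate by parts into zeroth-order quantities controlled by $\|\diver\widetilde{\bfu}\|_{L^\infty}(\|\sigma_\e\|^2_{L^2_x}+\|\bfxi_\e\|^2_{L^2_x})$, and the other lower-order couplings are absorbed by Young's inequality and boundedness of the coefficients. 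The two genuinely coupling terms, $\langle\widetilde{\rho}\grad\sigma_\e,\bfxi_\e\rangle$ from the velocity equation and $\langle p'(\widetilde{\rho})\diver((\widetilde{\rho})^{-1}\bfxi_\e),\sigma_\e\rangle$ from the density equation, need not cancel; after integrating the first by parts onto $\sigma_\e$, however, both reduce to expressions of the type $\langle(\text{bounded})\diver\bfxi_\e,\sigma_\e\rangle+\langle(\text{bounded})\bfxi_\e,\sigma_\e\rangle$, which by Young's inequality are bounded by $\delta\|\grad\bfxi_\e\|^2_{L^2_x}+C_\delta(\|\sigma_\e\|^2_{L^2_x}+\|\bfxi_\e\|^2_{L^2_x})$; choosing $\delta$ small absorbs the gradient into the dissipation. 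The forcing yields $\|\lambda(\rho_d-\rho^*)\|_{L^2_x}\|\sigma_\e\|_{L^2_x}$, $\|\lambda(\bfu_d-\bfu^*)\|_{L^2_x}\|\bfxi_\e\|_{L^2_x}$, $\|\zeta[F_\rho(\rho^*,\bfu^*)]^*\eta\|_{L^2_x}\|\sigma_\e\|_{L^2_x}$, and the $H^{-1}$--$H^1$ pairing $\|\zeta[F_\bfu(\rho^*,\bfu^*)]^*\eta\|_{H^{-1}_x}\|\bfxi_\e\|_{H^1_x}$, the last once more absorbed up to a small multiple of $\|\grad\bfxi_\e\|^2_{L^2_x}$. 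Gr{\"o}nwall's inequality then gives bounds on $\sigma_\e$ in $L^\infty_t(L^2_x)$ and on $\bfxi_\e$ in $L^\infty_t(L^2_x)\cap L^2_t(H^1_{0,x})$, uniform in $\e$ and the Galerkin level.

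Finally I would pass to the limit. Since the reversed system is linear with fixed smooth coefficients and $\e\grad\sigma_\e\to 0$ in $L^2_t(L^2_x)$, weak-$*$ limits in $L^\infty_t(L^2_x)$ and weak limits in $L^2_t(H^1_{0,x})$ of a subsequence already identify a distributional solution of the limit system --- no compactness beyond weak convergence is required, there being no nonlinear terms. Reversing time once more produces $(\sigma,\bfxi)$ with the stated regularity solving \eqref{adj0} in the sense of distributions; the condition $\bfxi|_\Gamma=0$ is inherited from $\bfxi\in L^2_t(H^1_{0,x})$, and the terminal conditions $\sigma(T,\cdot)=\bfxi(T,\cdot)=0$ are recovered from the weak formulation tested against functions not vanishing at $t=T$, using that the equations force $\Pt\sigma,\Pt\bfxi\in L^2_t(H^{-1}_x)$ and hence $\sigma,\bfxi\in C_w([0,T];L^2_x)$. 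The main obstacle is exactly this loss of a derivative in the density--velocity coupling: the term $\widetilde{\rho}\grad\sigma$ would naively demand $\sigma\in L^2_t(H^1_x)$, which is not available with energy-class data, and it is the artificial-viscosity regularization together with the observation that these cross terms --- though not cancelling --- are each individually dominated at the energy level by the parabolic dissipation in $\bfxi$ that makes the estimate close.
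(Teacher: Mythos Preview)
The paper does not give a detailed proof of this theorem; it only remarks that ``It can be proven similarly to \cref{mainthm}.'' Taken literally, that would mean: reverse time, solve the (reversed) adjoint-density transport equation by characteristics for a given $\bfxi$, run a Schauder fixed point on a Galerkin space for $\bfxi$, and close via the a~priori estimates of the Appendix. Your route is different: instead of the characteristics/fixed-point decoupling, you parabolically regularize the $\sigma$-equation by $\e\Delta\sigma$ with Neumann data, solve the resulting parabolic--parabolic system directly by Galerkin, derive a single energy estimate uniform in $\e$, and let $\e\to 0$. Both approaches are sound at the stated regularity. The advantage of yours is that it handles the loss-of-derivative coupling $\widetilde{\rho}\grad\sigma$ transparently --- one integration by parts throws the derivative onto $\bfxi$ in the energy estimate, and in the limit the term is simply read distributionally as $\grad(\widetilde{\rho}\sigma)-\sigma\grad\widetilde{\rho}\in L^2_t(H^{-1}_x)$. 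The paper's implicit approach would instead need to invoke parabolic theory for the $\bfxi$-equation with $H^{-1}_x$ forcing (precisely because $\grad\sigma$ is only $H^{-1}_x$), which is standard but an extra ingredient not present in the proof of \cref{mainthm}. Your argument is therefore a genuine, and arguably more self-contained, alternative to what the paper sketches; the price is the additional vanishing-viscosity layer.
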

	
\section{Proof of the Pontryagin maximum principle}
Having collected all the preliminary results, we now prove \cref{mthm}. 

Let $(\rho^*,\bfu^*,\bfU^*)$ be an optimal triple for Problem \cref{ptag}. First we modify the cost functional by penalizing the state constraint. Following the choice of penalization in Wang and Wang \cite{ww03}, let $\e>0$ and define the \emph{penalized cost functional} 
\[\cost_\e(\rho,\bfu,\bfU) = \left[(\cost(\rho,\bfu,\bfU)-\cost(\rho^*,\bfu^*,\bfU^*)+\e)^2 + \mathrm{d}_W^2(F(\rho,\bfu))\right]^{1/2}\]
where $\mathrm{d}_W$ denotes the distance to the set $W\subset X$ in the norm of $X$, i.e.
	\[\mathrm{d}_W(F(\rho,\bfu)) = \inf_{w\in W}\|w-F(\rho,\bfu)\|_X.\]
We recall the state trajectory $(\rho,\bfu)(\cdot)$ is determined uniquely by the control $\bfU$ (cf. \cref{mainthm}), so that $\cost_\e$ may be considered a functional of $\bfU$ only.

Fix $\adcont$ as in \eqref{controls}. By \cref{comlem}, the metric space $(\adcont,d_E)$ is complete, where $d_E$ is the Ekeland metric. 
Recall that the maps $\bfU\mapsto \rho[\bfU]$ and $\bfU\mapsto\bfu[\bfU]$ are continuous in the topologies specified by \cref{nearby}, $\mathrm{d_W}$ is Lipschitz continuous with Lipschitz constant $1$, and $F$ is assumed to be Fr{\'e}chet differentiable. From these properties and the continuity condition \cref{ineqc}, it follows that $\cost_\e$ is lower semi-continuous (even continuous) with respect to the control $\bfU\in\adcont$. Furthermore, $\cost_\e$ is bounded from below, and the following inequalities hold,
	\begin{equation}
	\inf_{\bfU\in\adcont}\cost_\e(\rho,\bfu,\bfU)\le \cost_\e(\rho^*,\bfu^*,\bfU^*) = \e \le \inf_{\bfU\in\adcont}\cost_\e(\rho,\bfu,\bfU)+\e.
	\end{equation}
The triple $(\rho^*,\bfu^*,\bfU^*)$ is therefore an $\e$-minimizer of $\cost_\e$ and we may apply the Ekeland variational principle, \cref{th1}, with $\lambda=\sqrt{\e}$, to deduce there exists $(\rho_\e,\bfu_\e,\bfU_\e)\in\adall$ (cf. \cref{defall}) such that
	\begin{subequations}\begin{align}\label{ek0}
	& \cost_\e(\rho_\e,\bfu_\e,\bfU_\e)\le \cost_\e(\rho^*,\bfu^*,\bfU^*) = \e\\
	& d_E(\bfU_\e,\bfU^*)\le \sqrt{\e},\label{ek1}\\
	& \cost_\e(\rho,\bfu,\bfU)\ge \cost_\e(\rho_\e,\bfu_\e,\bfU_\e)-\sqrt{\e}d_E(\bfU,\bfU_\e),\qquad \forall\bfU\in\adcont.\label{ek03}
	\end{align}\end{subequations}
For the following computations we suppose $\e$ is fixed. The inequality \eqref{ek03} holds for any control in $\adcont$, and so we choose the admissible spike variation $\bfU_{\e}^h$ (see \cref{def}) of $\bfU_\e$ with corresponding trajectory $(\rho^h_\e,\bfu^h_\e)$ and deduce
	\begin{equation}\label{b1}\begin{aligned}
		-\sqrt{\e}&\le \frac{1}{h}\left[\cost_\e(\rho^h_\e,\bfu^h_\e,\bfU^h_\e)-\cost_\e(\rho_\e,\bfu_\e,\bfU_\e)\right]\\
		& = \frac{1}{h}\left[(\cost(\rho_\e^h,\bfu_\e^h,\bfU_\e^h)-\cost(\rho^*,\bfu^*,\bfU^*)+\e)^2 + \mathrm{d}_W^2(F(\rho_\e^h,\bfu_\e^h))\right]^{1/2}\\
		&\quad -\frac{1}{h}\left[(\cost(\rho_\e,\bfu_\e,\bfU_\e)-\cost(\rho^*,\bfu^*,\bfU^*)+\e)^2 + \mathrm{d}_W^2(F(\rho_\e,\bfu_\e))\right]^{1/2}
		\end{aligned}\end{equation}
From \cref{nearby} and the $L^2$-continuity provided by \cref{ineqc}, we observe that $\cost_\e(\rho_\e^h,\bfu_\e^h,\bfU_\e^h)=\cost_\e(\rho_\e,\bfu_\e,\bfU_\e)+o(1)$, and using the identity
	\[\sqrt{x} - \sqrt{y} = \frac{x-y}{\sqrt{x}+\sqrt{y}},\qquad x>0,\,y>0,\]
we obtain from \cref{b1} that
	\begin{equation}\label{kk}\begin{aligned}
	-\sqrt{\e}
	&\le \,
	\frac{1}{2\cost_\e(\rho_\e,\bfu_\e,\bfU_\e)+o(1)}
	\bigg\{\frac{1}{h}\bigg[(\cost(\rho_\e^h,\bfu_\e^h,\bfU_\e^h)
	-\cost(\rho^*,\bfu^*,\bfU^*)+\e)^2\\
	&\quad-(\cost(\rho_\e,\bfu_\e,\bfU_\e)-\cost(\rho^*,\bfu^*,\bfU^*)+\e)^2\bigg]
	+\frac{1}{h}[\mathrm{d}_W^2(F(\rho_\e^h,\bfu_\e^h))-\mathrm{d}_W^2(F(\rho_\e,\bfu_\e))]\bigg\}\\
	&= C_{\e}^h\frac{\cost(\rho_\e^h,\bfu_\e^h,\bfU_\e^h)-\cost(\rho_\e,\bfu_\e,\bfU_\e)}{h}\left(\cost(\rho_\e^h,\bfu_\e^h,\bfU_\e^h)+\cost(\rho_\e,\bfu_\e,\bfU_\e)-2\cost(\rho^*,\bfu^*,\bfU^*)+2\e\right)\\
	&\quad+C_\e^h\frac{\mathrm{d}_W^2(F(\rho_\e^h,\bfu_\e^h))-\mathrm{d}_W^2(F(\rho_\e,\bfu_\e))}{h},
	\end{aligned}\end{equation}
where $C_\e^h := (2\cost_\e(\rho_\e,\bfu_\e,\bfU_\e)+o(1))^{-1}$. Next we obtain the limit as $h\rightarrow 0$ in \cref{kk}. The computations are organized by first defining the following:
		\begin{equation}\begin{aligned}
		&\frac{1}{h}(\cost(\rho_\e^h,\bfu_\e^h,\bfU_\e^h)-\cost(\rho_\e,\bfu_\e,\bfU_\e)) + \frac{1}{h}(\mathrm{d}_W^2(F(\rho_\e^h,\bfu_\e^h))-\mathrm{d}_W^2(F(\rho_\e,\bfu_\e)))\\
		&=\frac{1}{2h}\int_0^T\!\|\bfu^h_\e-\bfu_d\|^2_{L^2_x}-\|\bfu_\e-\bfu_d\|^2_{L^2_x}\,\dt
			+\frac{1}{2h}\int_0^T\!\|\rho^h_\e-\rho_d\|^2_{L^2_x}-\|\rho_\e-\rho_d\|^2_{L^2_x}\,\dt\\
		&\quad+\frac{1}{2h}\int_0^T\!\|\bfU^h_\e\|^2_{L^2_x}-\|\bfU_\e\|^2_{L^2_x}\,\dt + \frac{1}{h}(\mathrm{d}_W^2(F(\rho_\e^h,\bfu_\e^h))-\mathrm{d}_W^2(F(\rho_\e,\bfu_\e)))\\
		&=: \sum_{j=1}^4 I_j^h,
	\end{aligned}\end{equation}
where the $I^h_j$ correspond to the preceding three integrals ($j=1,2,3$), and distance term ($j=4$).

By definition of the spike variation, on the time interval $(0,\tau-h)$, the control $\bfU_\e$ and variation $\bfU_\e^h$ coincide, and so
$(\rho_\e^h,\bfu_\e^h,\bfU_\e^h)\equiv (\rho_\e,\bfu_\e,\bfU_\e)$ on this time interval. Throughout this section we assume $\tau$ is taken to be a Lebesgue point of all relevant functions. Such a choice is always possible. Taking these considerations into account, we compute first
	\begin{equation}\label{b2}\begin{aligned}
		\lim_{h\rightarrow 0^+}I_1^h 
		&=\lim_{h\rightarrow 0^+}\frac{1}{h}\int_{\tau-h}^\tau\!\frac{1}{2}\left(\|\bfu^h_\e\|^2_{L^2_x}
		-\|\bfu_\e\|^2_{L^2_x}\right)-\left<\bfu^h_\e-\bfu_\e,\bfu_d\right>\,\dt\\
		&\quad+\lim_{h\rightarrow 0^+}\frac{1}{h}\int_\tau^T\!\frac{1}{2}\left(\|\bfu^h_\e\|^2_{L^2_x}
		-\|\bfu_\e\|^2_{L^2_x}\right)-\left<\bfu_\e^h-\bfu_\e,\bfu_d\right>\,\dt\\
		&=\lim_{h\rightarrow 0^+}\frac{1}{2}\int_\tau^T\!\left<\frac{\bfu^h_\e-\bfu_\e}{h},\bfu^h_\e+\bfu_\e-2\bfu_d\right>\,\dt.\\
	\end{aligned}\end{equation}
where the integral supported on $(\tau-h,\tau)$ vanishes in the limit since we choose $\tau$ to be a Lebesgue point.

Let $(z_\e,\bfv_\e)$ be a solution of the linearized system \eqref{limiteq} corresponding to the control $\bfU_\e$, as specified in \cref{convthm}.

By \cref{convthm} and \cref{nearby}, it follows that $h^{-1}(\bfu^h_\e-\bfu_\e)\rightarrow \bfv_\e$ in $L^\infty_t(L^2_x)$ as $h\rightarrow 0$ and $\bfu^h_\e\rightarrow \bfu_\e$ in $L^2_t(L^2_x)$ as $h\rightarrow 0$. Passing to the limit in \eqref{b2} we deduce
	\begin{equation}\label{limu}
		\lim_{h\rightarrow 0^+}I_1^h = \int_\tau^T\!\langle \bfu_\e-\bfu_d,\bfv_\e \rangle\,\dt.
	\end{equation}
Similarly, using that ${h}^{-1}(\rho^h_\e-\rho_\e)\rightarrow z_\e$ in $L^\infty_t(L^2_x)$ and $\rho^h_\e\rightarrow \rho_\e$ in $L^2_t(L^2_x)$ as $h\rightarrow 0$, we deduce
	\begin{equation*}\begin{aligned}
		\lim_{h\rightarrow 0^+}I_2^h 
		&=\lim_{h\rightarrow 0^+}\frac{1}{h}\int_{\tau-h}^\tau\!\frac{1}{2}\left(\|\rho^h_\e\|^2_{L^2_x}
		-\|\rho_\e\|^2_{L^2_x}\right)-\left<\rho^h_\e-\rho_\e,\rho_d\right>\,\dt\\
		&\quad+\lim_{h\rightarrow 0^+}\frac{1}{h}\int_\tau^T\!\frac{1}{2}\left(\|\rho^h_\e\|^2_{L^2_x}
		-\|\rho_\e\|^2_{L^2_x}\right)-\left<\rho^h_\e-\rho_\e,\rho_d\right>\,\dt\\
		&=\lim_{h\rightarrow 0^+}\frac{1}{2}\int_\tau^T\!\left<\frac{\rho^h_\e-\rho_\e}{h},\rho^h_\e+\rho_\e-2\rho_d\right>\,\dt\\
		&= \int_\tau^T\!\left<\rho_\e-\rho_d,z_\e\right>\,\dt
	\end{aligned}\end{equation*}
Next, using the definition of $\bfU^h_\e$,
	\begin{equation}\label{b4}\begin{aligned}
		\lim_{h\rightarrow 0^+}I_3^h &= 
		\lim_{h\rightarrow 0^+}\frac{1}{2h}\int_{\tau-h}^\tau\!\|\bfW\|^2_{L^2_x}-\|\bfU_\e(t)\|^2_{L^2_x}\,\dt\\
		& = \frac{1}{2}\|\bfW\|^2_{L^2_x}-\frac{1}{2}\|\bfU_\e(\tau)\|^2_{L^2_x}.
	\end{aligned}\end{equation}
Now we consider $I_4^h$. The squared distance function $w \mapsto \mathrm{d}^2_W(w)$ is continuously Fr{\'e}chet differentiable on $X$ with 
	\begin{equation*}
	D\mathrm{d}^2_W(w) = 
	\left\{\begin{aligned}
	& 2\mathrm{d}_W(w)\eta,\qquad \{\eta\}=\partial\mathrm{d}_W(w),\,\,\mathrm{if}\,\, w\notin W\\
	& 0,\qquad\qquad\qquad\mathrm{if}\,\,w\in W.
	\end{aligned}\right.
	\end{equation*}
By assumption, $X^*$ is strictly convex, and so if $w\notin W$, $\partial\mathrm{d}_W(w)$ consists of a single element with unit norm in $X^*$ (cf. page 154, Li and Yong \cite{ly95optimal}). Hence without loss of generality we can write
	\begin{equation}\label{distdiv}\left\{\begin{aligned}
	& D\mathrm{d}^2_W(w) = 2\mathrm{d}_W(w)\eta,\\
	& \eta\in \partial\mathrm{d}_W(w),\qquad \|\eta\|_{X^*}=1.
	\end{aligned}\right.
	\end{equation}
Furthermore, we have the following Fr{\'e}chet derivative of $F$ at the point $(\rho,\bfu)$ in terms of its partial Fr{\'e}chet derivatives (cf. Proposition 2.53 \cite{penot}):
	\begin{equation}\label{pard}
	[DF(\rho,\bfu)](z,{\bf v}) = [F_\rho(\rho,\bfu)]z + [F_{\bfu}(\rho,\bfu)]{\bf v}.
	\end{equation}
From \cref{convthm} we have that
	\begin{equation}\label{taylor}\begin{aligned}
		&\rho_\e^h = \rho_\e + hz_\e + hr_{1}^h,\qquad\quad
			\lim_{h\rightarrow 0^+}{\|r_{1}^h\|_{C([0,T];L^2(\Omega))} = 0}\\
		&\bfu_\e^h = \bfu_\e + h{\bf v}_\e + hr_{2}^h,\qquad\quad 
			\lim_{h\rightarrow 0^+}{\|r_{2}^h\|_{C([0,T];L^2(\Omega;\mathbb{R}^3))} = 0}\\
	\end{aligned}\end{equation}
The composition of Fr{\'e}chet differentiable functions is differentiable and obeys a chain rule, and so we obtain from \cref{distdiv}-\cref{taylor} that
	\begin{equation}\begin{aligned}\label{sublim1}
		\lim_{h\rightarrow 0^+} I_4^h
		&= \lim_{h\rightarrow 0^+}\frac{1}{h}(\mathrm{d}_W^2(F(\rho_\e^h,\bfu_\e^h))-\mathrm{d}_W^2(F(\rho_\e,\bfu_\e)))\\
		&= \langle 2\mathrm{d}_W(F(\rho_\e,\bfu_\e))\eta_\e,[F_\rho(\rho_\e,\bfu_\e)]z_\e + [F_{\bfu}(\rho_\e,\bfu_\e)]{\bf v_\e}\rangle_{X^*;X},
	\end{aligned}\end{equation}
where $\eta_\e\in \partial\mathrm{d}_W(F(\rho_\e,\bfu_\e))\subset X^*$ and $\|\eta_\e\|_{X^*} = 1$.

We are now in a position to let $h\rightarrow 0$ in \cref{kk}, obtaining
	\begin{equation}\begin{aligned}\label{epad}
		-\sqrt{\e}\le 
		&\int_\tau^T\!\left<\lambda_\e(\bfu_\e-\bfu_d),
		\bfv_\e\right>+\left<\lambda_\e(\rho_\e-\rho_d),z_\e\right>\,\dt
		+\frac{1}{2}\lambda_\e(\|\bfW\|^2_{L^2_x} -\|\bfU_\e(\tau)\|^2_{L^2_x})\\
		&+\langle a_\e,[F_\rho(\rho_\e,\bfu_\e)]z_\e+[F_\bfu(\rho_\e,\bfu_\e)]\bfv_\e \rangle_{X^*;X},
	\end{aligned}\end{equation}
where 
	\begin{equation*}
		a_\e := \frac{\mathrm{d}_W(F(\rho_\e,\bfu_\e))}{\cost_\e(\rho_\e,\bfu_\e,\bfU_\e)}\eta_\e,\qquad\quad \lambda_\e := \frac{\cost(\rho_\e,\bfu_\e,\bfU_\e)-\cost(\rho^*,\bfu^*,\bfU^*)+\e}{\cost_\e(\rho_\e,\bfu_\e,\bfU_\e)}
	\end{equation*}
Next we introduce the Hamiltonian. From the weak formulation of the adjoint equations (cf. \cref{adjex}) we let $(\sigma_\e,\bfxi_\e)$ be a weak solution of 
\begin{equation}\label{adj1}\begin{aligned}
		&-\Pt\sigma_\e-\widetilde{\bfu}\cdot\grad\sigma_\e
		=(\widetilde{\rho})^{-1}\bfxi_\e\cdot[-(\Pt\widetilde{\bfu}+\widetilde{\bfu}\cdot\grad\widetilde{\bfu})+\bff] +p'(\widetilde{\rho})\diver\left((\widetilde{\rho})^{-1}\bfxi_\e\right)\\
		&\hspace{80pt}+\lambda_\e(\rho_d-\rho_\e)-[F_\rho(\rho_\e,\bfu_\e)]^*a_\e,\\
		&-\Pt\bfxi_\e-\diver(\bfxi_\e\otimes\widetilde{\bfu})+\bfxi_\e\cdot(\grad\widetilde{\bfu})^T =\widetilde{\rho}\grad\sigma_\e+\diver\mathbb{S}(\grad((\widetilde{\rho})^{-1}\bfxi_\e))+\lambda_\e(\bfu_d-\bfu_\e)-[F_\bfu(\rho_\e,\bfu_\e)]^*a_\e,\\
		&\bfxi_\e\big|_\Gamma = 0,\\
		&\sigma_\e(T,\cdot)=0,\quad\bfxi_\e(T,\cdot)=0,
	\end{aligned}\end{equation}
Combining with \eqref{epad} we get,
	\begin{equation}\label{end}\begin{aligned}
		-\sqrt{\e}
		&\le \int_\tau^T\!\left<\sigma_\e,-\Pt z_\e-\diver(z_\e \widetilde{\bfu})-\diver(\widetilde{\rho}\bfv_\e)\right>\dt\\
		&\quad +\int_\tau^T\!\left<\bfxi_\e,-(\widetilde{\rho})^{-1}z_\e(\Pt\widetilde{\bfu}+\widetilde{\bfu}\cdot\grad\widetilde{\bfu})
			-(\widetilde{\rho})^{-1}\grad(z_\e p'(\widetilde{\rho}))\right>\dt\\
		&\quad +\int_\tau^T\!\left<\bfxi_\e,(\widetilde{\rho})^{-1}z_\e\bff+(\widetilde{\rho})^{-1}\diver\mathbb{S}(\grad\bfv_\e)
		-\Pt\bfv_\e-\widetilde{\bfu}\cdot\grad\bfv_\e-\bfv_\e\cdot\grad\widetilde{\bfu}\right>\dt\\
		&\quad +\frac{1}{2}\lambda_\e(\|\bfW\|^2_{L^2_x}-\|\bfU_\e(\tau)\|^2_{L^2_x})
		+\int_\Om\!\sigma_\e(T)z_\e(T)-\sigma_\e(\tau)z_\e(\tau)\,\dx\\
		&\quad+\int_\Om\!(\widetilde{\rho})^{-1}(T)\bfxi_\e(T)\cdot\bfv_\e(T)-
			(\widetilde{\rho})^{-1}(\tau)\bfxi_\e(\tau)\cdot\bfv_\e(\tau)\,\dx\\
		&=\frac{1}{2}\lambda_\e(\|\bfW\|^2_{L^2_x}-\|\bfU_\e(\tau)\|^2_{L^2_x})-\int_\Om\!(\widetilde{\rho})^{-1}(\tau)\bfxi_\e(\tau)\cdot
			(\bfW-\bfU_\e(\tau))\,\dx
	\end{aligned}\end{equation}
valid for all $\bfW\in\overline{B}_R(0)$. The strong solution property of $(z_\e,\bfv_\e)$ allowed to remove the time integrals. In \eqref{end} we also use that $\sigma_\e(T)=\bfxi_\e(T)=0$ and 
$z_\e(\tau)=0$, $\bfv_\e(\tau)=(\widetilde{\rho})^{-1}(\tau)(\bfW-\bfU_\e(\tau))$. The adjoint equations \eqref{adj1} and the inequality \eqref{end} may be interpreted as necessary conditions for $\e$-optimal control.

To conclude, we must pass $\e\rightarrow 0$ in \cref{end}. From the definition of $a_\e$ and $\lambda_\e$, and using that $\|\eta_\e\|_{X^*}=1$, it follows that
	\begin{equation}\label{bounds}
		1\le\lambda_\e + \|a_\e\|_{X^*}\le 2.
	\end{equation}
Therefore, there exist $\lambda\in \mathbb{R}$ and $a\in X^*$ such that (along subsequences),
	\begin{equation}\label{etac}
	\lambda_\e \rightarrow \lambda\quad\mathrm{as}\,\,\e\rightarrow 0,
	\end{equation}
and
	\begin{equation}\label{etaco}
	a_\e\weak^* a\quad\mathrm{in}\,\, X^*\,\,\mathrm{as}\,\, \e\rightarrow 0.
	\end{equation}
From the estimates on $\bfxi_\e$ using \cref{adjex}, we further obtain that $\Pt\bfxi_\e \in L^2_t(H^{-1}_x)$ uniformly in $\e$. From this estimate and $\bfxi_\e \in L^2_t(H^1_{0,x})$ we obtain by continuous embedding that $\bfxi_\e \in C([0,T];L^2(\Om;\mathbb{R}^3))$. Furthermore, by an application of the Aubin-Lions lemma, we obtain that
	\begin{equation}\label{exiconv}	
	\bfxi_\e\rightarrow \bfxi\quad\mathrm{strongly\,\,in\,\,}L^2_t(L^2_x)\,\,\mathrm{as\,\,}\e\rightarrow 0.
	\end{equation}
Furthermore, from \cref{ek1} we obtain convergence of the control terms,
	\begin{equation}\label{limuu}
	\bfU_\e\rightarrow \bfU^*\quad \mathrm{strongly\,\,in}\,\,L^2_t(L^2_x)\,\,\mathrm{as\,\,}\e\rightarrow 0.
	\end{equation} 
Integrating \cref{end} in time from $0$ to $T$ and using \cref{etac},  \cref{exiconv}, and \cref{limuu}, we pass $\e\rightarrow 0$ obtaining
	\begin{equation}\label{intlim}
	0\le \frac{1}{2}\lambda\int_0^T\int_\Omega\! |\bfW|^2-|\bfU^*|^2 \,\dxdt
	- \int_0^T\int_\Omega\! (\widetilde{\rho})^{-1}\bfxi\cdot(\bfW-\bfU^*) \,\dxdt
	\end{equation}
From the weak star convergence \eqref{etaco} and using that $F$ is continuously Fr{\'e}chet differentiable, we furthermore obtain that
	\begin{equation}\label{weaks}\begin{aligned}
		&[F_\rho(\rho_\e,\bfu_\e)]^*a_\e \weak [F_\rho(\rho,\bfu)]^*a\quad\mathrm{weakly\,\,in\,\,}L^2_t(L^2_x),\\
		&[F_\bfu(\rho_\e,\bfu_\e)]^*a_\e \weak [F_\bfu(\rho,\bfu)]^*a\quad\mathrm{weakly\,\,in\,\,}L^2_t(H^{-1}_x).
	\end{aligned}\end{equation}
Passing to the limit $\e\rightarrow 0$ in \eqref{adj1}, we arrive at 
	\begin{equation}\label{elim}\begin{aligned}
	&-\Pt\sigma-\widetilde{\bfu}\cdot\grad\sigma
		=(\widetilde{\rho})^{-1}\bfxi\cdot[-(\Pt\widetilde{\bfu}+\widetilde{\bfu}\cdot\grad\widetilde{\bfu})+\bff] +p'(\widetilde{\rho})\diver\left((\widetilde{\rho})^{-1}\bfxi\right)\\
		&\hspace{80pt}+\lambda(\rho_d-\rho^*)-[F_\rho(\rho^*,\bfu^*)]^*a,\\
		&-\Pt\bfxi-\diver(\bfxi\otimes\widetilde{\bfu})+\bfxi\cdot(\grad\widetilde{\bfu})^T =\widetilde{\rho}\grad\sigma+\diver\mathbb{S}(\grad((\widetilde{\rho})^{-1}\bfxi))+\lambda(\bfu_d-\bfu^*)-[F_\bfu(\rho^*,\bfu^*)]^*a,\\
		&\bfxi\big|_\Gamma = 0,\\
		&\sigma(T,\cdot)=0,\,\,\bfxi(T,\cdot)=0.\\
	\end{aligned}\end{equation}
The integral maximum principle and adjoint equations for $\bfU^*$ to be an optimal control have been obtained. 

Furthermore,
using that $a_\e\in \partial \mathrm{d}_W(F(\rho_\e,\bfu_\e))$, by the definition of subdifferential we get that
	\[\mathrm{d}_W(w)\ge \mathrm{d}_W(F(\rho_\e,\bfu_\e))+\langle a_\e,w-F(\rho_\e,\bfu_\e) \rangle_{X^*;X}\quad\forall w\in W.\]
Using that $\mathrm{d}_W(w)=0$ and the nonnegativity of the distance function, it follows that
\[\langle a_\e, w-F(\rho_\e,\bfu_\e) \rangle\le 0.\]
Passing $\e\rightarrow 0$, and using the weak convergence of $a_\e$ and the strong convergences of $\rho_\e$ and $\bfu_\e$, it follows that
	\begin{equation}\label{ncone}
		\langle \eta,w-F(\rho^*,\bfu^*) \rangle \le 0\quad\forall w\in W.
	\end{equation}
The condition \eqref{ncone} says that $a$ belongs to the normal cone of $W$ at $F(\rho^*,\bfu^*)$, i.e. $\eta_0\in N_W(F(\rho^*,\bfu^*))$.

This concludes the proof of \cref{mthm}. It only remains to justify the existence of an optimal triple $(\rho^*,\bfu^*,\bfU^*)$.

\section{Existence of optimal controls}
In this section we establish the existence of optimal controls for Problem \eqref{ptag}. We take as an assumption that the set of admissible triples $\adall$ is nonempty.
	\begin{theorem}\label{opex}
		Let $\cost$ be defined by \eqref{cost}, with $\rho_d, \bfu_d\in L^2_t(L^2_x)$ given. Let $\adall$ be defined as in 
		\cref{defall}, and suppose $\adall\neq\emptyset$. Then there exists an optimal triple $(\rho^*,\bfu^*,\bfU^*)\in\adall$ such that
			\[\cost(\rho^*,\bfu^*,\bfU^*)=\inf_{(\rho,\bfu,\bfU)\in\adall}\cost(\rho,\bfu,\bfU)=: j.\]
	\end{theorem}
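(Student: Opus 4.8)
The plan is to use the direct method in the calculus of variations. First I would take a minimizing sequence: let $(\rho_n,\bfu_n,\bfU_n)\in\adall$ be such that $\cost(\rho_n,\bfu_n,\bfU_n)\to j$ as $n\to\infty$. Since $\adall\neq\emptyset$, such a sequence exists, and since $\cost\ge 0$, the infimum $j$ is finite. Because each $\bfU_n\in\adcont = L^2(0,T;\overline{B}_R(0))$, the controls are uniformly bounded in $L^2_t(L^2_x)$ (indeed $\|\bfU_n\|_{L^2_tL^2_x}\le R\sqrt{T}$), so after passing to a subsequence (not relabeled) we have $\bfU_n\weak\bfU^*$ weakly in $L^2_t(L^2_x)$. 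The set $\adcont$ is convex and closed in $L^2_t(L^2_x)$, hence weakly closed, so $\bfU^*\in\adcont$.

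Next I would pass to the limit in the state equations. By \cref{mainthm}, the corresponding states $(\rho_n,\bfu_n)$ satisfy the uniform bounds \eqref{mainest}: $\rho_n$ bounded in $L^\infty_t(H^1_x)$ with $\Pt\rho_n$ bounded in $L^\infty_t(L^2_x)$, and $\bfu_n$ bounded in $L^\infty_t(H^1_{0,x})\cap L^2_t(H^2_x)$ with $\Pt\bfu_n$ bounded in $L^2_t(L^2_x)$ --- these bounds depend only on the data $(\widetilde{\rho},\widetilde{\bfu},\bff,\rho_0,\bfu_0)$ and on $R$ through $\|\bfU_n\|_{L^2_tL^2_x}$, so they are uniform in $n$. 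Extracting a further subsequence, I obtain weak-$*$ limits $\rho_n\weak^*\rho^*$, $\bfu_n\weak^*\bfu^*$ in these spaces, and by the Aubin--Lions lemma strong convergence $\rho_n\to\rho^*$ in, say, $C([0,T];L^2_x)$ and $\bfu_n\to\bfu^*$ in $L^2_t(H^1_{0,x})$. Since the system \eqref{NSlin} is \emph{linear} in $(\rho,\bfu,\bfU)$, passing to the limit in the weak formulation is routine: every term is either linear in the state (weak convergence suffices against smooth test functions, the coefficients $\widetilde{\rho},\widetilde{\bfu},p'(\widetilde{\rho}),\bff$ being fixed and regular) or linear in $\bfU$ (handled by weak convergence of $\bfU_n$). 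The initial and boundary conditions survive the limit by the strong convergence and trace continuity. Hence $(\rho^*,\bfu^*)$ is the strong solution of \eqref{NSlin} associated to $\bfU^*$ in the sense of \cref{mainthm}; by uniqueness there it is \emph{the} solution, so in fact the whole sequence converges.

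Then I would handle the state constraint and the cost. Since $F:L^2_t(L^2_x)\times L^2_t(H^1_{0,x})\to X$ is continuous (being continuously Fr\'echet differentiable) and $(\rho_n,\bfu_n)\to(\rho^*,\bfu^*)$ strongly in $L^2_t(L^2_x)\times L^2_t(H^1_{0,x})$, we get $F(\rho_n,\bfu_n)\to F(\rho^*,\bfu^*)$ strongly in $X$; as $W$ is closed and $F(\rho_n,\bfu_n)\in W$ for all $n$, the limit $F(\rho^*,\bfu^*)\in W$, so $(\rho^*,\bfu^*,\bfU^*)\in\adall$. Finally, $\cost$ is a sum of squared $L^2$-norms, hence convex and strongly continuous on $L^2_t(L^2_x)\times L^2_t(L^2_x)\times L^2_t(L^2_x)$, therefore weakly lower semicontinuous; combined with the strong convergence of $\rho_n,\bfu_n$ and the weak convergence of $\bfU_n$, this yields
\[
\cost(\rho^*,\bfu^*,\bfU^*)\le\liminf_{n\to\infty}\cost(\rho_n,\bfu_n,\bfU_n)=j.
\]
Since $(\rho^*,\bfu^*,\bfU^*)\in\adall$ the reverse inequality $\cost(\rho^*,\bfu^*,\bfU^*)\ge j$ is immediate, so equality holds and the triple is optimal.

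The only mildly delicate point --- and the one I would write out most carefully --- is verifying that the uniform bounds from \cref{mainthm} genuinely depend on $\bfU_n$ only through a norm that is uniformly controlled on $\adcont$, and then confirming that the compactness from Aubin--Lions is strong enough in the right topology for both the limit passage in the (linear) PDE and the continuity of $F$; everything else is the standard direct-method template, made easy here by linearity of the state equation and convexity of the cost.
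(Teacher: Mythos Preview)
Your proposal is correct and follows essentially the same direct-method argument as the paper: minimizing sequence, uniform control bounds (you use the explicit $R\sqrt T$ from the definition of $\adcont$ while the paper extracts it from the boundedness of $\cost$, but this is cosmetic), uniform state bounds from \cref{mainthm}, weak limits plus Aubin--Lions compactness, linear passage to the limit in the PDE, continuity of $F$ and closedness of $W$ for the state constraint, and weak lower semicontinuity of the convex cost. There is no substantive difference in approach.
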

\begin{proof}
We employ the direct method from the calculus of variations. By assumption, $\adall$ is nonempty. Since $\cost$ is bounded below, we deduce the existence of a  minimizing sequence $\{(\rho^n,\bfu^n,\bfU^n)\}_{n=1}^\infty$, of elements of $\adall$, such that 
	\begin{equation}\label{ex1}
		\lim_{n\rightarrow\infty}\cost(\rho^n,\bfu^n,\bfU^n)= j.
	\end{equation}
Furthermore, there exists $R$ large enough such that $0\le \cost(\rho^n,\bfu^n,\bfU^n)\le R<+\infty$, uniformly in $n$. In particular, 
$\|\bfU^n\|_{L^2_t(L^2_x)}\le C(R)$, and by Theorem \ref{mainthm}, we obtain  estimates on 
$\rho^n\in L^\infty_t(H^1_x)$, $\Pt\rho^n\in L^\infty_t(L^2_x)$, $\bfu^n\in L^\infty_t(H^1_{0,x})\cap L^2_t(H^2_x)$,
and $\Pt\bfu^n\in L^2_t(L^2_x)$, uniform in $n$. 

Without relabeling, there exists a subsequence $(\rho^n,\bfu^n,\bfU^n)$ converging 
weakly to a triple $(\rho^*,\bfu^*,\bfU^*)$ in $[L^2_t(L^2_x)]^3$. Recall $\adcont$ is a closed and convex set.
Since $\cost$ is continuous and convex over $L^2_t(L^2_x)\times L^2_t(L^2_x)\times\adcont$, it follows that $\cost$ is 
also sequentially weakly lower semi-continuous. We deduce that 
	\begin{equation}\label{ex2}
		\cost(\rho^*,\bfu^*,\bfU^*)\le \liminf_{n\rightarrow\infty}\cost(\rho^n,\bfu^n,\bfU^n).
	\end{equation}
Combining \eqref{ex1} and \eqref{ex2} we get that
	\begin{equation*}
		j\le \cost(\rho^*,\bfu^*,\bfU^*)
		\le \liminf_{n\rightarrow\infty}\cost(\rho^n,\bfu^n,\bfU^n)
		= \lim_{n\rightarrow\infty}\cost(\rho^n,\bfu^n,\bfU^n) = j.
	\end{equation*}
Therefore $(\rho^*,\bfu^*,\bfU^*)$ is a minimizer. 

It remains to check that 
$(\rho^*,\bfu^*,\bfU^*)$ is a strong solution of \eqref{NSlin} and satisfies the state constraint. However, from the uniform estimates, we also obtain along 
the subsequence that 
$\rho^n\weak^* \rho^*$ in $L^\infty_t(H^1_x)$, $\Pt\rho^n\weak^* \Pt\rho^*$ in $L^\infty_t(L^2_x)$, $\bfu^n\weak^* \bfu^*$ in 
$L^\infty_t(H^1_{0,x})\cap L^2_t(H^2_x)$, and $\Pt\bfu^n\weak \Pt\bfu^*$ in $L^2_t(L^2_x)$. Since the equations \eqref{NSlin} are linear, we may pass to the limit to conclude that $(\rho^*,\bfu^*,\bfU^*)$ satisfies the PDE. In particular, we may pass to the limit in \eqref{NSlin} weakly in $L^2_t(L^2_x)$ and use the density of test functions to conclude the governing equations are satisfied almost everywhere in $Q_T$.

Furthermore, the convergence on $(\rho^n,\bfu^n)$ allow us to conclude by Aubin-Lions lemma that along subsequences $(\rho^n,\bfu^n)$ converges strongly to $(\rho^*,\bfu^*)$ in $L^2_t(L^2_x)\times L^2_t(H^1_{0,x})$. By continuity of $F$ through its Frechet differentiability and continuity of the distance function it follows that
	\[\lim_{n\rightarrow\infty}\mathrm{d}_W(F(\rho^n,\bfu^n)) = \mathrm{d}_W(F(\rho^*,\bfu^*)),\]
and so $(\rho^*,\bfu^*)\in W$ since $W$ is closed in $X$.

The initial conditions make sense noting that
$\rho^n\in L^2_t(H^1_x)$ and $\Pt\rho^n\in L^2_t(L^2_x)$ imply $\rho^n\rightarrow \rho^*$ in $C([0,T];L^2(\Om))$ along a subsequence by the Aubin-Lions lemma. Considering the weak formulations for the admissible pair $(\rho^n,\bfu^n)$ and $(\rho^*,\bfu^*)$, and an appropriate choice of test functions, it follows that $\langle \rho^*(0) - \rho_0 , \phi \rangle=0$ for all $\phi\in L^2_x$ which implies $\rho^*(0)=\rho_0$. A similar argument applies to the velocity in order to obtain $\bfu^*(0)=\bfu_0$. 
\end{proof}

\begin{appendix}
\section{Proof of Theorem \ref{mainthm}}
\subsection{A priori estimates}
The \emph{a priori} estimates are organized into the following propositions.
\begin{proposition}\label{prop1}
Suppose the assumptions of Theorem \ref{mainthm} are satisfied. Then any regular (smooth) solution $(\rho,\bfu)$ of the linearized system \eqref{NSlin} satisfies the following energy inequality for all $t\in (0,T)$:
	\begin{equation}\label{en1}\begin{aligned}
		&\int_\Om\frac{1}{2}\left(|\rho|^2+|\grad\rho|^2+|\bfu|^2\right)\!(t,x)\,\dx
		+\int_0^t\int_\Om\!\left(\mu|\grad\bfu|^2+(\mu+\lambda)|\diver\bfu|^2\right)\dx\mathrm{d}s\\
		&\lesssim \int_\Om\!\frac{1}{2}\left(|\rho_0|^2+|\grad\rho_0|^2+|\bfu_0|^2\right)\dx
		+\int_0^t\! A(s)\int_\Om\!\frac{1}{2}\left(|\rho|^2+|\grad\rho|^2+|\bfu|^2\right)\dx\mathrm{d}s\\
		&\quad+\e\|\widetilde{\rho}\|_{L^\infty_t(L^\infty_x)}\int_0^t\int_\Om\!|\grad\diver\bfu|^2\,\dx\mathrm{d}s
		+C(\e)\int_0^t\int_\Om\!|\bfU|^2\,\dx\mathrm{d}s,
	\end{aligned}\end{equation}
where $A(\cdot)\in L^1(0,T)$ depends on $\e>0$ small enough, $\|\grad\widetilde{\bfu}\|_{L^\infty_x}$, 
$\|\Pt\widetilde{\bfu}+\widetilde{\bfu}\cdot\grad\widetilde{\bfu}\|_{L^{3/2}_x}^2$, $\|\bff\|_{L^{3/2}_x}^2$,
$\|p'(\widetilde{\rho})\|_{L^\infty_t(L^\infty_x)}$, $\|\widetilde{\rho}\|_{L^\infty_t(L^\infty_x)}$,
$\|\grad\diver\widetilde{\bfu}\|_{L^3_x}^2$, $\|\grad\widetilde{\rho}\|_{L^\infty_x}^2$, and
$\|\grad^2\widetilde{\rho}\|_{L^3_x}^2$.
\end{proposition}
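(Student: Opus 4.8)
The plan is to run the standard energy method for the hyperbolic--parabolic system \eqref{NSlin}, testing the equations with three multipliers so that the left-hand side reproduces $\tfrac12(|\rho|^2+|\grad\rho|^2+|\bfu|^2)$ together with the viscous dissipation. Since $(\rho,\bfu)$ is assumed smooth, all manipulations are legitimate. Concretely I would: (i) test the momentum equation \eqref{l2} against $\widetilde{\rho}\bfu$ in $L^2(\Om)$ (equivalently, multiply \eqref{l2} by $\widetilde{\rho}$ and pair with $\bfu$); (ii) pair the continuity equation \eqref{l1} with $\rho$ in $L^2(\Om)$; and (iii) apply $\grad$ to \eqref{l1} and pair with $\grad\rho$ in $L^2(\Om)$. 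Summing the three resulting identities and integrating in time over $(0,t)$ should yield \eqref{en1}, once every cross term is estimated by H\"older, Sobolev, and Young's inequalities.

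In step (i), integrating the viscous term by parts using the no-slip condition \eqref{boundary} and the identity $\diver\mathbb{S}(\grad\bfu)=\mu\Delta\bfu+(\mu+\lambda)\grad\diver\bfu$ produces exactly $-\mu\int_\Om|\grad\bfu|^2-(\mu+\lambda)\int_\Om|\diver\bfu|^2$, a genuine dissipation since $\mu>0$ and $\mu+\lambda=\tfrac13\mu+\eta>0$. The time term gives $\tfrac{d}{dt}\int_\Om\tfrac12\widetilde{\rho}|\bfu|^2$ plus $-\tfrac12\int_\Om(\Pt\widetilde{\rho})|\bfu|^2$, and this last term cancels against the one coming from $\widetilde{\rho}(\widetilde{\bfu}\cdot\grad\bfu)\cdot\bfu=\tfrac12\widetilde{\rho}\widetilde{\bfu}\cdot\grad|\bfu|^2$ via the base-state continuity equation, so that $\|\Pt\widetilde{\rho}\|$ need not appear; $m\le\widetilde{\rho}\le M$ lets one trade $\widetilde{\rho}|\bfu|^2$ for $|\bfu|^2$. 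The remaining transport terms in $\widetilde{\rho}L(\rho,\bfu)$ are integrated by parts (using smoothness of $\widetilde{\bfu}$) and absorbed into the Gr\"onwall integrand $A(s)$; the terms $\rho(\Pt\widetilde{\bfu}+\widetilde{\bfu}\cdot\grad\widetilde{\bfu})\cdot\bfu$, $\rho\bff\cdot\bfu$ are treated by H\"older with exponents $(6,\tfrac32,6)$ and the embeddings $\|\rho\|_{L^6_x}\lesssim\|\rho\|_{H^1_x}$, $\|\bfu\|_{L^6_x}\lesssim\|\grad\bfu\|_{L^2_x}$, then split by Young into $\e\|\grad\bfu\|_{L^2_x}^2$ (absorbed into the dissipation) plus $C(\e)(\|\Pt\widetilde{\bfu}+\widetilde{\bfu}\cdot\grad\widetilde{\bfu}\|_{L^{3/2}_x}^2+\|\bff\|_{L^{3/2}_x}^2)\|\rho\|_{H^1_x}^2$ (into $A(s)$); the pressure coupling $\grad(\rho\,p'(\widetilde{\rho}))\cdot\bfu$ integrates by parts to $-\int_\Om\rho\,p'(\widetilde{\rho})\diver\bfu$, split into $\e\|\diver\bfu\|_{L^2_x}^2$ plus $C(\e)\|p'(\widetilde{\rho})\|_{L^\infty_x}^2\|\rho\|_{L^2_x}^2$; and $\int_\Om\bfU\cdot\bfu\le\e\|\bfu\|_{L^2_x}^2+C(\e)\|\bfU\|_{L^2_x}^2$, contributing the $C(\e)\int|\bfU|^2$ term.

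In step (ii), $\int_\Om\diver(\rho\widetilde{\bfu})\rho=\tfrac12\int_\Om(\diver\widetilde{\bfu})\rho^2$ after integrating by parts, while $\int_\Om\diver(\widetilde{\rho}\bfu)\rho=\int_\Om(\widetilde{\rho}\diver\bfu+\grad\widetilde{\rho}\cdot\bfu)\rho$ splits by Young into an $\e\|\diver\bfu\|_{L^2_x}^2$ piece (absorbed) and pieces controlled by $\|\widetilde{\rho}\|_{L^\infty_x}$, $\|\grad\widetilde{\rho}\|_{L^\infty_x}$ fed into $A(s)$. In step (iii), differentiating \eqref{l1} in space and pairing with $\grad\rho$, the transport commutators with $\widetilde{\bfu}$, $\widetilde{\rho}$ are lower order: the terms $(\widetilde{\bfu}\cdot\grad)\grad\rho\cdot\grad\rho$, $(\grad\widetilde{\bfu})^T\grad\rho\cdot\grad\rho$ integrate by parts or are bounded by $\|\grad\widetilde{\bfu}\|_{L^\infty_x}\|\grad\rho\|_{L^2_x}^2$, and $\rho\,\grad\diver\widetilde{\bfu}\cdot\grad\rho$, $(\grad^2\widetilde{\rho})\bfu\cdot\grad\rho$, $(\grad\bfu)^T\grad\widetilde{\rho}\cdot\grad\rho$ are handled by H\"older/Young using $\|\grad\diver\widetilde{\bfu}\|_{L^3_x}$, $\|\grad^2\widetilde{\rho}\|_{L^3_x}$, $\|\grad\widetilde{\rho}\|_{L^\infty_x}$ (the norms listed in the statement) together with $\e$-absorptions into $\|\grad\bfu\|_{L^2_x}^2$.

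The one genuinely obstructive term, and the main obstacle, is $\int_\Om\widetilde{\rho}\,\grad\diver\bfu\cdot\grad\rho$ arising in step (iii): it carries second derivatives of $\bfu$, which the first-order dissipation $\mu|\grad\bfu|^2+(\mu+\lambda)|\diver\bfu|^2$ does not control, and no further integration by parts helps (it only raises the derivative order on $\rho$ or on $\bfu$). I would simply bound it by Young as $\e\|\widetilde{\rho}\|_{L^\infty_t(L^\infty_x)}\int_\Om|\grad\diver\bfu|^2+C(\e)\|\widetilde{\rho}\|_{L^\infty_t(L^\infty_x)}\int_\Om|\grad\rho|^2$, send the second summand into $A(s)$, and \emph{leave the first on the right-hand side} --- this is exactly the $\e\|\widetilde{\rho}\|_{L^\infty_t(L^\infty_x)}\int|\grad\diver\bfu|^2$ term in \eqref{en1}. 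This reflects the hyperbolic--parabolic coupling of compressible flow ($\grad\rho$ is merely transported while $\bfu$ is only parabolically regularized), and it is not resolved within this proposition; it is dealt with afterwards (in the following proposition, where the second-order estimates furnish a bound on $\int_0^t\!\int_\Om|\grad^2\bfu|^2$ absorbing $\int_0^t\!\int_\Om|\grad\diver\bfu|^2$ for $\e$ small), after which a Gr\"onwall argument closes \eqref{mainest}. Finally, summing the three identities, absorbing all $\e$-terms into the dissipation on the left (legitimate since $\mu,\mu+\lambda>0$ and $\e$ is chosen small relative to the listed base-state norms), collecting the rest into $A(\cdot)\in L^1(0,T)$ (its $L^1$-integrability following from $\bff\in L^2_t(L^q_x)\hookrightarrow L^2_t(L^{3/2}_x)$ and smoothness of the base state), and integrating from $0$ to $t$ gives \eqref{en1}.
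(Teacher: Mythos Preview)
Your proposal is correct and follows essentially the same approach as the paper: the three testings you describe match the paper's identities \eqref{ii0}, \eqref{ii2}, \eqref{ii3} (with the same use of the base-state continuity equation to avoid $\|\Pt\widetilde{\rho}\|$), the term-by-term estimates via H\"older/Young/Sobolev parallel the paper's $I_1$--$I_{14}$, and you have correctly singled out the obstructive term $\int_\Om\widetilde{\rho}\,\grad\rho\cdot\grad\diver\bfu$ (the paper's $I_{12}$) and handled it exactly as the paper does, leaving $\e\|\widetilde{\rho}\|_{L^\infty_t(L^\infty_x)}\int|\grad\diver\bfu|^2$ on the right to be closed later by Proposition~\ref{prop2}.
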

\begin{proof}
Multiplying \eqref{l1} by $\rho$, integrating by parts over $\Om$, and noting that the velocity $\widetilde{\bfu}$ vanishes on the boundary, we deduce
	\begin{equation}\label{ii0}\begin{aligned}
		\frac{\mathrm{d}}{\dt}\int_\Om\!\frac{1}{2}\rho^2\,\dx = -\int_\Om\!\frac{1}{2}\rho^2 \diver\widetilde{\bfu}\,\dx
			-\int_\Om\! \rho(\widetilde{\rho}\diver\bfu+\bfu\cdot\grad\widetilde{\rho})\,\dx.
	\end{aligned}\end{equation}	
Next, using that $(\widetilde{\rho},\widetilde{\bfu})$ satisfies equation \eqref{l02}, we get
	\[\widetilde{\rho}(\Pt\bfu+\widetilde{\bfu}\cdot\grad\bfu)=\Pt(\widetilde{\rho}\bfu)+\diver(\widetilde{\rho}\bfu\otimes\widetilde{\bfu}),\]
and by a simple computation
	\begin{equation}\label{i}
		\bfu\cdot[\Pt(\widetilde{\rho}\bfu)+\diver(\widetilde{\rho}\bfu\otimes\widetilde{\bfu})]
		=\Pt\left(\frac{1}{2}\widetilde{\rho}|\bfu|^2\right)+\diver\left(\frac{1}{2}\widetilde{\rho}|\bfu|^2\widetilde{\bfu}\right).
	\end{equation}
Taking the scalar product of the momentum equation \eqref{l2} with $\bfu$, invoking \eqref{i}, and integrating by parts we deduce
	\begin{equation}\label{ii2}\begin{aligned}
		\frac{\mathrm{d}}{\dt}\int_\Om\!\frac{1}{2}\widetilde{\rho}|\bfu|^2\,\dx+\int_\Om\!\mathbb{S}(\grad\bfu):\grad\bfu\,\dx
		&= -\int_\Om\!\widetilde{\rho}\bfu\cdot(\bfu\cdot\grad)\widetilde{\bfu}\,\dx
			-\int_\Om\!\rho(\Pt\widetilde{\bfu}+\widetilde{\bfu}\cdot\grad\widetilde{\bfu})\cdot\bfu\,\dx\\
		&\quad+\int_\Om\!\left(\rho\bfu\cdot\bff+\bfu\cdot\bfU\right)\dx
		+\int_\Om\!\rho p'(\widetilde{\rho})\diver\bfu\,\dx.
	\end{aligned}\end{equation}
Next we obtain an estimate on the density gradient. Applying the gradient operator to \eqref{l1} and taking the scalar product 
with $\grad\rho$, we get
	\begin{equation}\label{ii3}\begin{aligned}
		\frac{\mathrm{d}}{\dt}\int_\Om\!\frac{1}{2}|\grad\rho|^2\,\dx
		&= -\int_\Om\!\grad\rho\cdot[\grad\diver(\rho\widetilde{\bfu})+\grad\diver(\widetilde{\rho}\bfu)]\,\dx\\
		&=-\int_\Om\!\left(\frac{1}{2}\diver\widetilde{\bfu} |\grad\rho|^2
		+\rho\grad\rho\cdot\grad\diver\widetilde{\bfu}+\grad\rho\otimes\grad\rho:\grad\widetilde{\bfu}\right)\dx\\
		&\quad-\int_\Om\!\left((\grad\rho\cdot\grad\widetilde{\rho})\diver\bfu+\widetilde{\rho}\grad\rho\cdot\grad\diver\bfu\right)\dx\\
		&\quad-\int_\Om\!\left(\grad\rho\otimes\bfu:\grad^2\widetilde{\rho}+\grad\widetilde{\rho}\otimes\grad\rho:\grad\bfu\right)\dx,
	\end{aligned}\end{equation}
where the second equality follows from a few applications of the product rule and integrating by parts.

Finally, by virtue of $\bfu$ vanishing on $\Gamma_T$,
	\begin{equation}\label{seq}
		\int_\Om\!\mathbb{S}(\grad\bfu):\grad\bfu\,\dx = \int_\Om\!\left(\mu|\grad\bfu|^2+(\mu+\lambda)|\diver\bfu|^2\right)\dx.
	\end{equation}
Combining \eqref{ii0}, \eqref{ii2}, \eqref{ii3} and \eqref{seq}, and integrating in time we arrive at the energy identity
	\begin{equation}\label{iienergy}\begin{aligned}
		&\int_\Om\!\frac{1}{2}\left(\rho^2+|\grad\rho|^2+\widetilde{\rho}|\bfu|^2\right)\!(t,x)\,\dx
		+\int_0^t\int_\Om\!\left(\mu|\grad\bfu|^2+(\mu+\lambda)|\diver\bfu|^2\right)\dx\mathrm{d}s\\
		&= \int_\Om\!\frac{1}{2}\left(\rho_0^2+|\grad\rho_0|^2+\widetilde{\rho_0}|\bfu_0|^2\right)\dx
		-\int_0^t\int_\Om\!\widetilde{\rho}\bfu\cdot(\bfu\cdot\grad)\widetilde{\bfu}\,\dx\mathrm{d}s\\
		&\quad-\int_0^t\int_\Om\!\rho(\Pt\widetilde{\bfu}+\widetilde{\bfu}\cdot\grad\widetilde{\bfu})\cdot\bfu\,\dx\mathrm{d}s
			+\int_0^t\int_\Om\!\left(\rho\bfu\cdot\bff+\bfu\cdot\bfU\right)\dx\mathrm{d}s\\
		&\quad+\int_0^t\int_\Om\!\rho p'(\widetilde{\rho})\diver\bfu\,\dx\mathrm{d}s
			-\int_0^t\int_\Om\!\frac{1}{2}\rho^2\diver\widetilde{\bfu}\,\dx\mathrm{d}s
			-\int_0^t\int_\Om\!\rho\widetilde{\rho}\diver\bfu\,\dx\mathrm{d}s\\
		&\quad-\int_0^t\int_\Om\!\rho\bfu\cdot\grad\widetilde{\rho}\,\dx\mathrm{d}s
		-\int_0^t\int_\Om\!\frac{1}{2}\diver\widetilde{\bfu} |\grad\rho|^2\,\dx\mathrm{d}s
		-\int_0^t\int_\Om\!\rho\grad\rho\cdot\grad\diver\widetilde{\bfu}\,\dx\mathrm{d}s\\
		&\quad-\int_0^t\int_\Om\!\grad\rho\otimes\grad\rho:\grad\widetilde{\bfu}\,\dx\mathrm{d}s
		-\int_0^t\int_\Om\!(\grad\rho\cdot\grad\widetilde{\rho})\diver\bfu\,\dx\mathrm{d}s\\
		&\quad-\int_0^t\int_\Om\!\widetilde{\rho}\grad\rho\cdot\grad\diver\bfu\,\dx\mathrm{d}s
		-\int_0^t\int_\Om\!\grad\rho\otimes\bfu:\grad^2\widetilde{\rho}\,\dx\mathrm{d}s\\
		&\quad-\int_0^t\int_\Om\!\grad\widetilde{\rho}\otimes\grad\rho:\grad\bfu\,\dx\mathrm{d}s\\
		&= \int_\Om\!\frac{1}{2}\left(\rho_0^2+|\grad\rho_0|^2+\widetilde{\rho_0}|\bfu_0|^2\right)\dx
			+\sum_{i=1}^{14} I_i,
	\end{aligned}\end{equation}
where each of the $I_i$ denote one of the space-time integrals.
Repeatedly invoking H{\"o}lder's inequality, Young's inequality with $\e$, Poincar{\'e}'s inequality, and the Sobolev embedding 
$W^{1,2}_x\subset L^6_x$, we next estimate each of the $I_i$ as follows:
	\begin{align*}\label{bb}
		&|I_1|
		\le \int_0^t\!2\|\grad\widetilde{\bfu}(s,\cdot)\|_{L^\infty_x}\int_\Om\!\frac{1}{2}\widetilde{\rho}|\bfu|^2\,\dx\mathrm{d}s,\\
		&|I_2|
		\le C(\e)\int_0^t\!\|(\Pt\widetilde{\bfu}+\widetilde{\bfu}\cdot\grad\widetilde{\bfu})(s,\cdot)\|^2_{L^{3/2}_x}\|\rho\|^2_{W^{1,2}_x}
		\,\mathrm{d}s
			+\e\int_0^t\int_\Om\!|\grad\bfu|^2\,\dx\mathrm{d}s,\\
		&|I_3| \le C(\e)\int_0^t\!\|\bff(s,\cdot)\|^2_{L^{3/2}_x}\|\rho\|^2_{W^{1,2}_x}\,\mathrm{d}s
			+\e\int_0^t\int_\Om\!|\grad\bfu|^2\,\dx\mathrm{d}s\\
		&\qquad + C(\e)\int_0^t\int_\Om\!|\bfU|^2\,\dx\mathrm{d}s,\\
		&|I_4| \le \|p'(\widetilde{\rho})\|_{L^\infty_t(L^\infty_x)}
		\left(C(\e)\int_0^t\int_\Om\!\frac{1}{2}\rho^2\,\dx\mathrm{d}s+\e\int_0^t\int_\Om\!|\grad\bfu|^2\,\dx\mathrm{d}s\right),\\
		&|I_5| \le \int_0^t\|\diver\widetilde{\bfu}(s,\cdot)\|_{L^\infty_x}\int_\Om\!\frac{1}{2}\rho^2\,\dx\mathrm{d}s,\\
		&|I_6|\le \|\widetilde{\rho}\|_{L^\infty_t(L^\infty_x)}
		\left(C(\e)\int_0^t\int_\Om\!\frac{1}{2}\rho^2\,\dx\mathrm{d}s+\e\int_0^t\int_\Om\!|\grad\bfu|^2\,\dx\mathrm{d}s\right),\\
		&|I_7|
		\le C(\e)\int_0^t\!\|\grad\widetilde{\rho}(s,\cdot)\|^2_{L^{\infty}_x}\|\rho\|^2_{L^2_x}\,\mathrm{d}s
			+\e\int_0^t\int_\Om\!|\grad\bfu|^2\,\dx\mathrm{d}s,\\
		&|I_8|\le \frac{1}{2}\int_0^t\!\|\diver\widetilde{\bfu}(s,\cdot)\|_{L^\infty_x}\int_\Om\!|\grad\rho|^2\,\dx\mathrm{d}s,\\
		&|I_9|\le C\int_0^t\!\|\grad\diver\widetilde{\bfu}(s,\cdot)\|^2_{L^3_x}\|\rho\|^2_{W^{1,2}_x}
		\,\mathrm{d}s+\frac{1}{2}\int_0^t\int_\Om\!|\grad\rho|^2\,\dx\mathrm{d}s,\\
		&|I_{10}|\le \int_0^t\!\|\grad\widetilde{\bfu}(s,\cdot)\|_{L^\infty_x}\int_\Om\!|\grad\rho|^2\,\dx\mathrm{d}s,\\
		&|I_{11}|\le \int_0^t\!\|\grad\widetilde{\rho}(s,\cdot)\|_{L^\infty_x}
		\left(\frac{1}{2}\int_\Om\!|\grad\rho|^2\,\dx+C\int_\Om\!|\grad\bfu|^2\,\dx\right)\,\mathrm{d}s,\\
		&|I_{12}|\le \|\widetilde{\rho}\|_{L^\infty_t(L^\infty_x)}
		\int_0^t\int_\Om\!C(\e)|\grad\rho|^2+\e|\grad\diver\bfu|^2\,\dx\mathrm{d}s,\\
		&|I_{13}|\le C(\e)\int_0^t\|\grad^2\widetilde{\rho}(s,\cdot)\|^2_{L^3_x}\|\grad\rho\|^2_{L^2_x}\,\mathrm{d}s
		+\e\int_0^t\int_\Om\!|\grad\bfu|^2\,\dx\mathrm{d}s,\\
		&|I_{14}|\le C(\e)\int_0^t\!\|\grad\widetilde{\rho}(s,\cdot)\|_{L^\infty_x}\int_\Om\!|\grad\rho|^2\,\dx\mathrm{d}s
		+\e\int_0^t\int_\Om\!|\grad\bfu|^2\,\dx\mathrm{d}s.
	\end{align*}
 These estimates are now combined with the energy inequality \eqref{iienergy} to deduce
	\begin{equation*}\begin{aligned}
		&\int_\Om\frac{1}{2}\left(|\rho|^2+|\grad\rho|^2+|\bfu|^2\right)(t,x)\,\dx
		+\int_0^t\int_\Om\!\left(\mu|\grad\bfu|^2+(\mu+\lambda)|\grad\diver\bfu|^2\right)\dx\mathrm{d}s\\
		&\lesssim \int_\Om\!\frac{1}{2}\left(|\rho_0|^2+|\grad\rho_0|^2+|\bfu_0|^2\right)\,\dx
		+\int_0^t\! A(s)\int_\Om\!\frac{1}{2}\left(|\rho|^2+|\grad\rho|^2+|\bfu|^2\right)\,\dx\mathrm{d}s\\
		&\quad+\frac{1}{2}\|\widetilde{\rho}\|_{L^\infty_t(L^\infty_x)}\int_0^t\int_\Om\!|\grad\diver\bfu|^2\,\dx\mathrm{d}s
		+ C(\e)\int_0^t\int_\Om\!|\bfU|^2\,\dx\mathrm{d}s.
	\end{aligned}\end{equation*}
where the $\e$-terms were absorbed into the left-hand side of \eqref{iienergy} by choosing $\e$ small enough, and where
$A(\cdot)\in L^1(0,T)$ depends on $\e$, $\|\grad\widetilde{\bfu}\|_{L^\infty_x}$, 
$\|\Pt\widetilde{\bfu}+\widetilde{\bfu}\cdot\grad\widetilde{\bfu}\|_{L^{3/2}_x}^2$, $\|\bff\|_{L^{3/2}_x}^2$,
$\|p'(\widetilde{\rho})\|_{L^\infty_t(L^\infty_x)}$, $\|\widetilde{\rho}\|_{L^\infty_t(L^\infty_x)}$,
$\|\grad\diver\widetilde{\bfu}\|_{L^3_x}^2$, $\|\grad\widetilde{\rho}\|_{L^\infty_x}^2$, and
$\|\grad^2\widetilde{\rho}\|_{L^3_x}^2$.
This concludes the proof.
\end{proof}
Our goal is to eventually apply Gr{\"o}nwall's lemma to \eqref{en1}, but first more estimates are needed on $\grad^2\bfu$. 
	\begin{proposition}\label{prop2}
		Suppose the assumptions of Theorem \ref{mainthm} are satisfied. 
		Then any regular (smooth) solution $(\rho,\bfu)$ of the linearized system \eqref{NSlin} satisfies 
		the following energy inequality for all $t\in (0,T)$:
			\begin{equation*}\begin{aligned}
		&\int_\Om\!\left(\frac{\mu}{2}|\grad\bfu|^2+\frac{\mu+\lambda}{2}|\diver\bfu|^2\right)\!(t,x)\,\dx
			+\int_0^t\int_\Om\!\left(|\Pt\bfu|^2+|\diver\mathbb{S}(\grad\bfu)|^2\right)\dx\mathrm{d}s\\
			&\lesssim \int_\Om\!\left(\frac{\mu}{2}|\grad\bfu_0|^2+\frac{\mu+\lambda}{2}|\diver\bfu_0|^2\right)\dx
				+ \int_0^t\! B_1(s)\|\rho\|^2_{W^{1,2}_x}\mathrm{d}s
			+\int_0^t\! B_2(s)\int_\Om\!|\grad\bfu|^2\,\dx\mathrm{d}s,
			\end{aligned}\end{equation*}
	\end{proposition}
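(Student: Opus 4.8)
\emph{Sketch of the proof.} The natural approach is the parabolic‑regularity test for the velocity equation: pair the momentum equation with $\Pt\bfu$, which produces at once the energy term $\tfrac{\mathrm d}{\dt}\!\int_\Om\!\big(\tfrac{\mu}{2}|\grad\bfu|^2+\tfrac{\mu+\lambda}{2}|\diver\bfu|^2\big)\dx$ and the dissipation $\int_\Om\widetilde{\rho}\,|\Pt\bfu|^2\dx$, and then read off $\|\diver\mathbb{S}(\grad\bfu)\|_{L^2_x}^2$ directly from the equation. To keep the diffusion term weight‑free, I would first multiply \eqref{l2} by $\widetilde{\rho}$ and write it as $\widetilde{\rho}\,\Pt\bfu-\diver\mathbb{S}(\grad\bfu)=\mathcal{R}$, with $\mathcal{R}:=\rho\bff+\bfU-\widetilde{\rho}\,L(\rho,\bfu)$ collecting only the terms that are at most first order in $(\rho,\bfu)$.

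Taking the $L^2(\Om)$ inner product of this identity with $\Pt\bfu$: since $(\widetilde{\rho},\widetilde{\bfu})$ is smooth up to $\partial\Om$ and a smooth solution satisfies $\bfu=0$ on $\Gamma_T$ (hence $\Pt\bfu=0$ there), integration by parts in the two pieces of $\diver\mathbb{S}(\grad\bfu)=\mu\Delta\bfu+(\mu+\lambda)\grad\diver\bfu$ gives
\[
\int_\Om\widetilde{\rho}\,|\Pt\bfu|^2\,\dx+\frac{\mathrm d}{\dt}\int_\Om\!\Big(\tfrac{\mu}{2}|\grad\bfu|^2+\tfrac{\mu+\lambda}{2}|\diver\bfu|^2\Big)\dx=\int_\Om\mathcal{R}\cdot\Pt\bfu\,\dx .
\]
Using $\widetilde{\rho}\ge m>0$ on the left and Young's inequality on the right, after absorbing an $\e\|\Pt\bfu\|_{L^2_x}^2$ into the left‑hand side one controls $\|\Pt\bfu\|_{L^2_x}^2+\tfrac{\mathrm d}{\dt}(\text{energy})$ by $C\|\mathcal{R}\|_{L^2_x}^2$. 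To bring in $\diver\mathbb{S}(\grad\bfu)$, use the same identity backwards: $\diver\mathbb{S}(\grad\bfu)=\widetilde{\rho}\,\Pt\bfu-\mathcal{R}$, so $\|\diver\mathbb{S}(\grad\bfu)\|_{L^2_x}^2\le 2M^2\|\Pt\bfu\|_{L^2_x}^2+2\|\mathcal{R}\|_{L^2_x}^2$; adding a small multiple of this and re‑absorbing the resulting $\|\Pt\bfu\|_{L^2_x}^2$ (possible since $m>0$), then integrating over $(0,t)$ with $\bfu(0)=\bfu_0$, reduces the whole statement to estimating $\int_0^t\|\mathcal{R}(s)\|_{L^2_x}^2\,\mathrm ds$ by the claimed right‑hand side.

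The substance is therefore the pointwise‑in‑time bound $\|\mathcal{R}\|_{L^2_x}^2\lesssim B_1(s)\|\rho\|_{W^{1,2}_x}^2+B_2(s)\|\grad\bfu\|_{L^2_x}^2+\|\bfU\|_{L^2_x}^2$, obtained term by term in $L(\rho,\bfu)=\bfu\cdot\grad\widetilde{\bfu}+\widetilde{\bfu}\cdot\grad\bfu+(\widetilde{\rho})^{-1}[\,p'(\widetilde{\rho})\grad\rho+\rho\,p''(\widetilde{\rho})\grad\widetilde{\rho}+\rho(\Pt\widetilde{\bfu}+\widetilde{\bfu}\cdot\grad\widetilde{\bfu})\,]$: the transport term via $\|\widetilde{\bfu}\cdot\grad\bfu\|_{L^2_x}\le\|\widetilde{\bfu}\|_{L^\infty_x}\|\grad\bfu\|_{L^2_x}$, the term $\bfu\cdot\grad\widetilde{\bfu}$ via $\|\grad\widetilde{\bfu}\|_{L^\infty_x}\|\bfu\|_{L^2_x}$ with Poincar\'e (since $\bfu\in H^1_{0,x}$), the pressure and base‑state terms by $(\|p'(\widetilde{\rho})\|_{L^\infty_x}+\|p''(\widetilde{\rho})\grad\widetilde{\rho}\|_{L^\infty_x}+\|\Pt\widetilde{\bfu}+\widetilde{\bfu}\cdot\grad\widetilde{\bfu}\|_{L^\infty_x})\|\rho\|_{W^{1,2}_x}$ after using $m\le\widetilde{\rho}$, and the control term trivially. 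The only delicate point is $\rho\bff$ with merely $\bff\in L^2_t(L^q_x)$, $q\ge3$: by H\"older $\|\rho\bff\|_{L^2_x}\le\|\rho\|_{L^{2q/(q-2)}_x}\|\bff\|_{L^q_x}$, and since $2q/(q-2)\in[2,6]$ for $q\ge3$ the embedding $H^1_x\hookrightarrow L^6_x$ gives $\|\rho\bff\|_{L^2_x}\lesssim\|\bff\|_{L^q_x}\|\rho\|_{W^{1,2}_x}$, contributing $\|\bff(s)\|_{L^q_x}^2\in L^1(0,T)$ to $B_1$. Collecting terms, $B_1,B_2\in L^1(0,T)$ depend only on the base‑state, pressure and forcing norms already appearing in \cref{prop1} (and, as in \eqref{en1}, the control enters only through the harmless extra term $\int_0^t\|\bfU\|_{L^2_x}^2\,\mathrm ds$). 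I do not expect a genuine analytic obstacle — for a smooth base state every such norm is finite; the only real work is the bookkeeping ensuring the surviving coefficients lie in $L^1(0,T)$, which is precisely what will later let \cref{prop1} and this proposition be combined through Gr\"onwall's lemma, with $\diver\mathbb{S}(\grad\bfu)\in L^2_t(L^2_x)$ then upgraded to $\grad^2\bfu\in L^2_t(L^2_x)$ by elliptic regularity for the Lam\'e system (cf. \cref{lame}).
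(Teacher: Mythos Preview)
Your argument is correct and in substance matches the paper's: both rewrite \eqref{l2} as $\widetilde{\rho}\,\Pt\bfu-\diver\mathbb{S}(\grad\bfu)={\bf g}$ (your $\mathcal{R}$), reduce everything to the bound $\int_0^t\|{\bf g}\|_{L^2_x}^2\,\mathrm ds\lesssim\int_0^t B_1\|\rho\|_{W^{1,2}_x}^2+B_2\|\grad\bfu\|_{L^2_x}^2+\|\bfU\|_{L^2_x}^2\,\mathrm ds$, and estimate ${\bf g}$ term by term exactly as you do (the paper uses $L^3$--$L^6$ pairings where you use $L^\infty$--$L^2$, which is immaterial for a smooth base state; you are also right that the $\|\bfU\|^2_{L^2_tL^2_x}$ contribution is present in the proof though omitted from the displayed statement).

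The one genuine tactical difference is how $\|\diver\mathbb{S}(\grad\bfu)\|_{L^2_x}^2$ is produced. The paper uses the Beir\~ao da Veiga device: it takes the scalar product of $\widetilde{\rho}\,\Pt\bfu-\diver\mathbb{S}(\grad\bfu)={\bf g}$ with the single test function $\Pt\bfu-\e\,\diver\mathbb{S}(\grad\bfu)$, which simultaneously generates $m\|\Pt\bfu\|_{L^2_x}^2$, $\e\|\diver\mathbb{S}(\grad\bfu)\|_{L^2_x}^2$, and (via the cross term $-\diver\mathbb{S}\cdot\Pt\bfu$) the time derivative of the energy, and then chooses $\e<7m/(32M^2)$ to absorb the mixed term $\e\widetilde{\rho}\,\Pt\bfu\cdot\diver\mathbb{S}$. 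Your route---test only with $\Pt\bfu$, then read $\diver\mathbb{S}(\grad\bfu)=\widetilde{\rho}\,\Pt\bfu-\mathcal{R}$ directly from the equation and add a small multiple of the resulting pointwise inequality---achieves the same thing in two steps and is arguably more transparent, since it never has to confront the cross term at all. Either way the output is identical, and both feed into the Gr\"onwall/\,Lam\'e combination exactly as you anticipate.
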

	where the coefficient $B_1\in L^1(0,T)$ depends on $\|\bff\|_{L^3_x}^2$, 
	$\|\Pt\widetilde{\bfu}+\widetilde{\bfu}\cdot\grad\widetilde{\bfu}\|_{L^3_x}^2$, 
	$\|\grad p'(\widetilde{\rho})\|_{L^3_x}^2$, $\|p'(\widetilde{\rho})\|_{L^\infty_x}^2$, $\|p'(\widetilde{\rho})\grad p'(\widetilde{\rho})\|_{L^3_x}^2$, and $B_2\in L^1(0,T)$ depends on $M$, 
	$\|\grad\widetilde{\bfu}\|_{L^\infty_x}^2$, $\|\widetilde{\bfu}\|_{L^\infty_x}^2$.
\begin{proof}
The following estimate is inspired by H. Beir{\~a}o da Veiga \cite{dv83}. 
Begin by rewriting equation \eqref{l2} in the form
	\begin{equation}\label{btrick}
		\widetilde{\rho}\Pt\bfu - \diver\mathbb{S}(\grad\bfu)={\bf g},
	\end{equation}
where
	\begin{equation*}
		{\bf g}= \rho\bff+\bfU-\rho(\Pt\widetilde{\bfu}+\widetilde{\bfu}\cdot\grad\widetilde{\bfu})-\grad(\rho p'(\widetilde{\rho}))
		-\widetilde{\rho}(\bfu\cdot\grad\widetilde{\bfu}-\widetilde{\bfu}\cdot\grad\bfu)
	\end{equation*}
contains the remaining terms. Let $\e>0$ and take the scalar product of \eqref{btrick} with $\Pt\bfu-\e\diver\mathbb{S}(\grad\bfu)$ to get
	\begin{equation}\label{bt0}
		\int_\Om\!(\widetilde{\rho}\Pt\bfu - \diver\mathbb{S}(\grad\bfu))\cdot(\Pt\bfu-\e\diver\mathbb{S}(\grad\bfu))\,\dx
		=\int_\Om\!{\bf g}\cdot(\Pt\bfu-\e\diver\mathbb{S}(\grad\bfu))\,\dx.
	\end{equation}
Integrating \eqref{bt0} in time, integrating by parts in space, and using H{\"o}lder's inequality we get
	\begin{equation}\label{bt1}\begin{aligned}
		&\int_\Om\!\left(\frac{\mu}{2}|\grad\bfu|^2+\frac{\mu+\lambda}{2}|\diver\bfu|^2\right)(t,x)\,\dx
		+\int_0^t\int_\Om\!\left(m|\Pt\bfu|^2+\e|\diver\mathbb{S}(\grad\bfu)|^2\right)\dx\mathrm{d}s\\
		&\le \int_\Om\!\left(\frac{\mu}{2}|\grad\bfu_0|^2+\frac{\mu+\lambda}{2}|\diver\bfu_0|^2\right)\,\dx\\
		&\quad+\int_0^t\int_\Om\!\left(|{\bf g}||\Pt\bfu|+\e|{\bf g}||\diver\mathbb{S}(\grad\bfu)|+\e|\widetilde{\rho}||\Pt\bfu|
		|\diver\mathbb{S}(\grad\bfu)|\right)\dx\mathrm{d}s\\
		&\le \int_0^t\!\left(\|{\bf g}\|_{L^2_x}\|\Pt\bfu\|_{L^2_x}+\e\|{\bf g}\|_{L^2_x}\|\diver\mathbb{S}(\grad\bfu)\|_{L^2_x}
			+\e M \|\Pt\bfu\|_{L^2_x}\|\diver\mathbb{S}(\grad\bfu)\|_{L^2_x}\right)\,\mathrm{d}s,
	\end{aligned}\end{equation}
where we also used that $0<m\le\widetilde{\rho}\le M<+\infty$. Next we estimate the right-hand side of \eqref{bt1} using Young's inequality:
	\begin{equation}\label{bt2}\begin{aligned}
		\|{\bf g}\|_{L^2_x}\|\Pt\bfu\|_{L^2_x} &\le \frac{m}{8}\|\Pt\bfu\|^2_{L^2_x}+\frac{2}{m}\|{\bf g}\|^2_{L^2_x},\\
		\e\|{\bf g}\|_{L^2_x}\|\diver\mathbb{S}(\grad\bfu)\|_{L^2_x} 
		&\le \frac{\e}{4}\|\diver\mathbb{S}(\grad\bfu)\|^2_{L^2_x}+\e\|{\bf g}\|^2_{L^2_x},\\
		\e M \|\Pt\bfu\|_{L^2_x}\|\diver\mathbb{S}(\grad\bfu)\|_{L^2_x}
		&\le 4\e M^2 \|\Pt\bfu\|^2_{L^2_x}+\frac{\e}{4}\|\diver\mathbb{S}(\grad\bfu)\|^2_{L^2_x}.
	\end{aligned}\end{equation}
Since $\e>0$ is arbitrary, we choose $\e< 7m/(32M^2)$ in order to absorb the estimates \eqref{bt2} into the left-hand side of \eqref{bt1}. Having chosen this $\e$, inserting the estimates \eqref{bt2} into \eqref{bt1} we get
		\begin{equation}\label{bt3}\begin{aligned}
			&\int_\Om\!\left(\frac{\mu}{2}|\grad\bfu|^2+\frac{\mu+\lambda}{2}|\diver\bfu|^2\right)(t,x)\,\dx
			+\int_0^t\int_\Om\!\left(|\Pt\bfu|^2+|\diver\mathbb{S}(\grad\bfu)|^2\right)\dx\mathrm{d}s\\
			&\lesssim \int_\Om\!\left(\frac{\mu}{2}|\grad\bfu_0|^2+\frac{\mu+\lambda}{2}|\diver\bfu_0|^2\right)\,\dx
			+ \int_0^t\int_\Om\!|{\bf g}|^2\,\dx\mathrm{d}s.
		\end{aligned}\end{equation}
Finally let us estimate the ${\bf g}$ term. By virtue of Minkowski's inequality,
	\begin{equation*}\begin{aligned}
		\|{\bf g}\|_{L^2_t(L^2_x)}^2 &\lesssim \|\rho\bff\|^2_{L^2_t(L^2_x)}+\|\bfU\|^2_{L^2_t(L^2_x)}
		+\|\rho(\Pt\widetilde{\bfu}+\widetilde{\bfu}\cdot\grad\widetilde{\bfu})\|^2_{L^2_t(L^2_x)}\\
		&\quad+\|\grad(\rho p'(\widetilde{\rho}))\|^2_{L^2_t(L^2_x)}
		+\|\widetilde{\rho}(\bfu\cdot\grad\widetilde{\bfu}-\widetilde{\bfu}\cdot\grad\bfu)\|^2_{L^2_t(L^2_x)}\\
		&=:\sum_{i=1}^5 J_i.
	\end{aligned}\end{equation*}
Employing the standard H{\"o}lder's inequality, Young's inequality, Poincar{\'e} inequality, and Sobolev embedding 
$W^{1,2}_x\subset L^6_x$, we estimate each of the $J_i$ as follows: 
	\begin{equation}\label{estj}\begin{aligned}
		&J_1\le \int_0^t\!\|\bff\|^2_{L^3_x}\|\rho\|^2_{L^6_x}\,\mathrm{d}s
			\le C\int_0^t\!\|\bff\|^2_{L^3_x}\|\rho\|^2_{W^{1,2}_x}\,\mathrm{d}s,\\
		&J_2 = \|\bfU\|^2_{L^2_t(L^2_x)}<+\infty,\\
		&J_3\le \int_0^t\!\|\Pt\widetilde{\bfu}+\widetilde{\bfu}\cdot\grad\widetilde{\bfu}\|^2_{L^3_x}
			\|\rho\|^2_{L^6_x}\,\mathrm{d}s
			\le C\int_0^t\! \|\Pt\widetilde{\bfu}+\widetilde{\bfu}\cdot\grad\widetilde{\bfu}\|^2_{L^3_x}
			\|\rho\|^2_{W^{1,2}_x}\,\mathrm{d}s,\\
		&J_4 = \int_0^t\int_\Om\!\left(|\rho|^2|\grad p'(\widetilde{\rho})|^2
			+2\rho\grad\rho\cdot p'(\widetilde{\rho})\grad p'(\widetilde{\rho})
			+|p'(\widetilde{\rho})|^2|\grad\rho|^2\right)\,\dx\mathrm{d}s\\
		&\quad\le C\int_0^t\!\|\grad p'(\widetilde{\rho})\|^2_{L^3_x}\|\rho\|^2_{W^{1,2}_x}\,\mathrm{d}s
			+C\int_0^t\!\|\rho\|^2_{W^{1,2}_x}\,\mathrm{d}s\\
		&\qquad+\int_0^t\!\|p'(\widetilde{\rho})\grad p'(\widetilde{\rho})\|^2_{L^3_x}
			\|\grad\rho\|^2_{L^2_x}\,\mathrm{d}s+\int_0^t\!
			\|p'(\widetilde{\rho})\|^2_{L^\infty_x}\int_\Om\!|\grad\rho|^2\,\dx\,\mathrm{d}s,\\
		&J_5\le M\int_0^t\!\|\grad\widetilde{\bfu}\|^2_{L^\infty_x}\int_\Om\!|\bfu|^2\,\dx\mathrm{d}s
			+M\int_0^t\!\|\widetilde{\bfu}\|^2_{L^\infty_x}\int_\Om\!|\grad\bfu|^2\,\dx\mathrm{d}s.
	\end{aligned}\end{equation}
Inserting the estimates \eqref{estj} into inequality \eqref{bt3} and rearranging, we get that
	\begin{equation*}\begin{aligned}
		&\int_\Om\!\left(\frac{\mu}{2}|\grad\bfu|^2+\frac{\mu+\lambda}{2}|\diver\bfu|^2\right)(t,x)\,\dx
			+\int_0^t\int_\Om\!\left(|\Pt\bfu|^2+|\diver\mathbb{S}(\grad\bfu)|^2\right)\dx\mathrm{d}s\\
			&\lesssim \int_\Om\!\left(\frac{\mu}{2}|\grad\bfu_0|^2+\frac{\mu+\lambda}{2}|\diver\bfu_0|^2\right)\,\dx
			+ \|\bfU\|^2_{L^2_t(L^2_x)}\\
			&\quad+ \int_0^t\! B_1(s)\|\rho\|^2_{W^{1,2}_x}\mathrm{d}s
			+\int_0^t\! B_2(s)\int_\Om\!|\grad\bfu|^2\,\dx\mathrm{d}s,
	\end{aligned}\end{equation*}
where the coefficient $B_1\in L^1(0,T)$ depends on $\|\bff\|_{L^3_x}^2$, 
$\|\Pt\widetilde{\bfu}+\widetilde{\bfu}\cdot\grad\widetilde{\bfu}\|_{L^3_x}^2$, 
$\|\grad p'(\widetilde{\rho})\|_{L^3_x}^2$, $\|p'(\widetilde{\rho})\|_{L^\infty_x}^2$, $\|p'(\widetilde{\rho})\grad p'(\widetilde{\rho})\|_{L^3_x}^2$, and $B_2\in L^1(0,T)$ depends on $M$,
$\|\grad\widetilde{\bfu}\|_{L^\infty_x}^2$, $\|\widetilde{\bfu}\|_{L^\infty_x}^2$.
The proof is complete.
\end{proof}
The next result provides the existence and regularity for the Lam{\'e} system, used to obtain the $W^{2,2}$ velocity estimates. It may be found in \cite{ns04}, Lemma 4.32.  
	\begin{proposition}\label{lame}
		Let $\Omega\subset\mathbb{R}^3$ be a bounded domain of class $C^2$, suppose 
		\[\mu>0,\quad 4\mu+3\lambda>0,\]
		and let ${\bf{F}}\in L^2(\Om;\mathbb{R}^3)$ be given. 
		Then there exists a unique strong solution $\bfu\in W^{2,2}(\Om;\mathbb{R}^3)\cap W^{1,2}_0(\Om;\mathbb{R}^3)$ satisfying 
			\begin{equation*}\begin{aligned}
				-\mu\Delta\bfu -(\mu+\lambda)\grad\diver\bfu &= {\bf{F}}\,\,\mathrm{in}\,\,\Omega,\\
				\bfu&=0\,\,\mathrm{on}\,\,\partial\Omega.
			\end{aligned}\end{equation*}
		Furthermore,
			\[\|\bfu\|_{W^{2,2}(\Om;\mathbb{R}^3)}\le C(\Om)\|{\bf{F}}\|_{L^2(\Om;\mathbb{R}^3)}.\]
	\end{proposition}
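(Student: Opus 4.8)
The plan is to obtain the solution in two stages: first a weak solution in $W^{1,2}_0(\Om;\mathbb{R}^3)$ via the Lax--Milgram lemma, then an elliptic-regularity bootstrap to $W^{2,2}$. The statement is the classical existence-and-regularity theory for the Lam\'e (linearized elasticity) operator with homogeneous Dirichlet data, and is proved in detail in \cite{ns04}; at the abstract level the $W^{2,2}$ estimate is an instance of the Agmon--Douglis--Nirenberg theory for elliptic systems, the Lam\'e operator being strongly elliptic and the Dirichlet conditions satisfying the complementing (Lopatinskii--Shapiro) condition precisely when $\mu>0$ and $2\mu+\lambda>0$; the hypothesis $4\mu+3\lambda>0$ together with $\mu>0$ implies this, since $2\mu+\lambda=\tfrac{1}{3}(4\mu+3\lambda)+\tfrac{2}{3}\mu$.

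For the weak formulation I would introduce, on $H:=W^{1,2}_0(\Om;\mathbb{R}^3)$, the bilinear form
\[
a(\bfu,\bfv)=\mu\int_\Om\grad\bfu:\grad\bfv\,\dx+(\mu+\lambda)\int_\Om\diver\bfu\,\diver\bfv\,\dx,
\]
obtained by multiplying $-\mu\Delta\bfu-(\mu+\lambda)\grad\diver\bfu$ by $\bfv$ and integrating by parts using $\bfv|_{\partial\Om}=0$. Boundedness of $a$ on $H\times H$ is immediate. For coercivity, note the pointwise bound $|\diver\bfu|^2\le 3|\grad\bfu|^2$; hence, when $\mu+\lambda<0$,
\[
a(\bfu,\bfu)=\mu\|\grad\bfu\|_{L^2}^2+(\mu+\lambda)\|\diver\bfu\|_{L^2}^2\ge(4\mu+3\lambda)\|\grad\bfu\|_{L^2}^2,
\]
and when $\mu+\lambda\ge 0$ trivially $a(\bfu,\bfu)\ge\mu\|\grad\bfu\|_{L^2}^2$, so in all cases $a(\bfu,\bfu)\ge\min\{\mu,\,4\mu+3\lambda\}\,\|\grad\bfu\|_{L^2}^2\gtrsim\|\bfu\|_H^2$ by the Poincar\'e inequality. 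Lax--Milgram then yields a unique $\bfu\in H$ with $a(\bfu,\bfv)=\langle{\bf F},\bfv\rangle$ for all $\bfv\in H$ and $\|\bfu\|_H\lesssim\|{\bf F}\|_{L^2}$; uniqueness of the strong solution follows by testing the homogeneous equation against $\bfu$.

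To promote $\bfu$ to $W^{2,2}$ I would first use Nirenberg's difference-quotient method for interior regularity: testing the weak identity with a suitably cut-off second difference quotient of $\bfu$ and using coercivity bounds the difference quotients of $\grad\bfu$ in $L^2$ uniformly in the step size, whence $\bfu\in W^{2,2}_{loc}$ with $\|\grad^2\bfu\|_{L^2(\Om')}\lesssim\|{\bf F}\|_{L^2}+\|\bfu\|_{W^{1,2}}$ on $\Om'\Subset\Om$. For boundary regularity I would use the $C^2$ regularity of $\partial\Om$ to straighten the boundary locally; tangential difference quotients then control every second derivative of $\bfu$ except the purely normal one, $\partial_{nn}\bfu$, which is recovered algebraically from the equation $-\mu\Delta\bfu-(\mu+\lambda)\grad\diver\bfu={\bf F}$ itself — the $3\times 3$ coefficient block multiplying $\partial_{nn}\bfu$ being invertible exactly because $\mu>0$ and $2\mu+\lambda>0$. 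A finite cover of $\partial\Om$ and a subordinate partition of unity globalize this, yielding $\bfu\in W^{2,2}(\Om;\mathbb{R}^3)$ with $\|\bfu\|_{W^{2,2}}\le C(\Om)\big(\|{\bf F}\|_{L^2}+\|\bfu\|_{W^{1,2}}\big)$; absorbing $\|\bfu\|_{W^{1,2}}\lesssim\|{\bf F}\|_{L^2}$ from the Lax--Milgram bound gives the asserted estimate. Finally, since $\bfu\in W^{2,2}$, integrating the weak identity by parts shows the equation holds a.e., so $\bfu$ is a strong solution.

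The hard part is the boundary-regularity step: after flattening one must check that the coupling among the three components of $\bfu$ still permits the normal second derivatives to be read off from the tangential ones and the data — equivalently, the complementing condition for the Dirichlet problem — and this is exactly where strong ellipticity, ensured here by $4\mu+3\lambda>0$, is indispensable. Everything else (Lax--Milgram, interior difference quotients, the covering argument) is routine and scalar-like.
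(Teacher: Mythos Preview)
Your argument is correct and is the standard route to this result: Lax--Milgram for existence and uniqueness in $W^{1,2}_0$, then elliptic regularity (interior difference quotients plus boundary flattening, or equivalently Agmon--Douglis--Nirenberg) to reach $W^{2,2}$. The coercivity computation is right, as is the observation that the coefficient block on $\partial_{nn}\bfu$ after flattening is $\mathrm{diag}(-\mu,-\mu,-(2\mu+\lambda))$, invertible under the stated hypotheses.

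There is nothing to compare against, however: the paper does not prove this proposition at all. It is stated as a quoted result, with the sentence ``It may be found in \cite{ns04}, Lemma 4.32'' immediately preceding it, and no proof is given. So your proposal supplies an argument where the paper simply invokes a reference; what you have written is essentially the content of that reference.
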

Proposition \eqref{lame} also has an $L^p$ version for $1<p<\infty$ (cf. \cite{ns04} Lemma 4.32). We conclude this section with the following lemma concerning the \emph{a priori} estimates.
	\begin{lemma}\label{apriori}
		Define 
			\[\mathcal{E}(\rho,\bfu):= \frac{1}{2}\left(\rho^2+|\grad\rho|^2+|\bfu|^2+\mu|\grad\bfu|^2+(\mu+\lambda)|\diver\bfu|^2\right).\]
		Suppose the assumptions of Theorem \ref{mainthm} are satisfied. Then any regular (smooth) solution $(\rho,\bfu)$ 
		of the linearized system \eqref{NSlin} satisfies the following estimate for any $t\in (0,T)$:
			\begin{equation*}\begin{aligned}
				&\int_\Om\!\mathcal{E}(\rho,\bfu)(t)\,\dx+\int_0^t\int_\Om\!\left(|\Pt\bfu|^2+|\grad^2\bfu|^2\right)\dx\mathrm{d}s\\
				&\lesssim \int_\Om\!
				\left(\mathcal{E}(\rho_0,\bfu_0)\,\dx + \|\bfU\|_{L^2_t(L^2_x)}\right)
				\cdot\exp\left(\int_0^t\! A(s)+B_1(s)+B_2(s)\,\mathrm{d}s\right),
			\end{aligned}\end{equation*}
		where $A, B_1, B_2\in L^1(0,T)$, depending on the data $\widetilde{\rho}, \widetilde{\bfu},$ 
		are defined in Propositions \ref{prop1} and \ref{prop2}. 
	\end{lemma}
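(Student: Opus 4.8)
The plan is to superpose the three \emph{a priori} ingredients already in hand — the first-order estimate of \cref{prop1}, the second-order estimate of \cref{prop2}, and the elliptic regularity for the Lam\'e operator of \cref{lame} — and then to close by Gr\"onwall's inequality. The one point that needs care is that the right-hand side of \eqref{en1} carries a \emph{second}-order term, $\e\|\widetilde{\rho}\|_{L^\infty_t(L^\infty_x)}\int_0^t\!\int_\Om|\grad\diver\bfu|^2\,\dx\mathrm{d}s$, which is not controlled by the left-hand side of \eqref{en1} alone and must be absorbed by the extra dissipation furnished by \cref{prop2}.

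First I would apply \cref{lame} pointwise in $t$. For a regular solution, ${\bf F}(t,\cdot):=-\diver\mathbb{S}(\grad\bfu)(t,\cdot)=-\mu\Delta\bfu-(\mu+\lambda)\grad\diver\bfu$ belongs to $L^2(\Om;\mathbb{R}^3)$, and $\bfu(t,\cdot)\in W^{2,2}_x\cap W^{1,2}_{0,x}$ solves the Lam\'e system with datum ${\bf F}(t,\cdot)$; by the uniqueness and the estimate in \cref{lame} (whose hypothesis $4\mu+3\lambda>0$ is assumed),
\[
\|\bfu(t,\cdot)\|_{W^{2,2}_x}\le C(\Om)\,\|\diver\mathbb{S}(\grad\bfu)(t,\cdot)\|_{L^2_x}.
\]
Since $|\grad\diver\bfu|\le C|\grad^2\bfu|$ pointwise, integrating in time yields
\[
\int_0^t\!\!\int_\Om\!|\grad\diver\bfu|^2\,\dx\mathrm{d}s\;\le\;C\!\int_0^t\!\!\int_\Om\!|\grad^2\bfu|^2\,\dx\mathrm{d}s\;\le\;C(\Om)\!\int_0^t\!\!\int_\Om\!|\diver\mathbb{S}(\grad\bfu)|^2\,\dx\mathrm{d}s,
\]
so the offending term in \eqref{en1} is dominated by a constant times the dissipation appearing on the left-hand side of the estimate of \cref{prop2}.

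Next I would form the linear combination (estimate of \cref{prop1}) $+\,K\cdot$ (estimate of \cref{prop2}), with $K$ chosen large — depending only on $\e$, $\|\widetilde{\rho}\|_{L^\infty_t(L^\infty_x)}$, and $C(\Om)$ — so that $\e\|\widetilde{\rho}\|_{L^\infty_t(L^\infty_x)}\int_0^t\!\int_\Om|\grad\diver\bfu|^2$, estimated as above, is absorbed into $K\int_0^t\!\int_\Om|\diver\mathbb{S}(\grad\bfu)|^2$ on the left of \cref{prop2}. After this absorption the left-hand side of the combined inequality still controls, up to a positive constant, $\int_\Om\mathcal{E}(\rho,\bfu)(t)\,\dx+\int_0^t\!\int_\Om\big(|\Pt\bfu|^2+|\grad^2\bfu|^2\big)\,\dx\mathrm{d}s$: the pointwise-in-$t$ part recombines term by term from $\int_\Om\tfrac12(|\rho|^2+|\grad\rho|^2+|\bfu|^2)(t)$ in \eqref{en1} and $K\int_\Om(\tfrac{\mu}{2}|\grad\bfu|^2+\tfrac{\mu+\lambda}{2}|\diver\bfu|^2)(t)$ in \cref{prop2}, while the $|\grad^2\bfu|^2$ dissipation is recovered from $K\int_0^t\!\int_\Om|\diver\mathbb{S}(\grad\bfu)|^2$ by \cref{lame} once more. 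Crucially, the right-hand side then consists only of $\int_\Om\mathcal{E}(\rho_0,\bfu_0)\,\dx$, a multiple of $\|\bfU\|^2_{L^2_t(L^2_x)}$, and an integral $\int_0^t\big(A(s)+K\,B_1(s)+K\,B_2(s)\big)\,\Phi(s)\,\mathrm{d}s$ in which $\Phi(s)$ involves only the first-order quantities $\|\rho(s)\|^2_{W^{1,2}_x}$, $\|\bfu(s)\|^2_{L^2_x}$, $\|\grad\bfu(s)\|^2_{L^2_x}$, each of which is $\lesssim\int_\Om\mathcal{E}(\rho,\bfu)(s)\,\dx$; no second-order term reappears on the right, so the Gr\"onwall structure is intact.

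Finally I would apply the integral form of Gr\"onwall's lemma: dropping the nonnegative dissipation term $\int_0^t\!\int_\Om(|\Pt\bfu|^2+|\grad^2\bfu|^2)$ gives
\[
\int_\Om\mathcal{E}(\rho,\bfu)(t)\,\dx\lesssim\Big(\int_\Om\mathcal{E}(\rho_0,\bfu_0)\,\dx+\|\bfU\|^2_{L^2_t(L^2_x)}\Big)\exp\!\Big(\int_0^t A(s)+B_1(s)+B_2(s)\,\mathrm{d}s\Big),
\]
and reinserting this bound into the right-hand side of the combined inequality recovers the asserted control on the dissipation term as well, which completes the proof. The main obstacle is precisely the decoupling just described: the density-gradient estimate of \cref{prop1} unavoidably generates a second-order velocity term, and it is the Lam\'e elliptic regularity of \cref{lame} together with the choice of a sufficiently large weight $K$ in the linear combination that lets this term be dominated by the $\diver\mathbb{S}(\grad\bfu)$ dissipation of \cref{prop2}; everything else is routine bookkeeping of $L^1_t$ coefficients followed by Gr\"onwall.
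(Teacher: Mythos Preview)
Your proposal is correct and follows essentially the same route as the paper: combine \cref{prop1} and \cref{prop2}, use the Lam\'e elliptic estimate of \cref{lame} to convert $\|\diver\mathbb{S}(\grad\bfu)\|_{L^2_x}^2$ into control of $\|\bfu\|_{W^{2,2}_x}^2$, absorb the lone second-order term $\e\|\widetilde{\rho}\|_{L^\infty_t(L^\infty_x)}\int_0^t\!\int_\Om|\grad\diver\bfu|^2$ from \eqref{en1}, and then apply Gr\"onwall. The only cosmetic difference is that the paper treats $\e$ as the free parameter to be taken small after summing, whereas you fix $\e$ and introduce a large multiplier $K$ on \cref{prop2}; these are equivalent ways to perform the same absorption.
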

	\begin{proof}
		Using Proposition \ref{lame}, we retrieve the estimate 
			\begin{equation*}
			\|\bfu\|_{W^{2,2}_x}\le C(\Om)\|\diver\mathbb{S}(\grad\bfu)\|_{L^2_x}.
			\end{equation*}
		Combining this estimate with the results of Propositions \ref{prop1} and \ref{prop2}, we get that
			\begin{equation*}\begin{aligned}
				&\int_\Om\!\mathcal{E}(\rho,\bfu)(t)\,\dx + \int_0^t\int_\Om\!|\Pt\bfu|^2+|\grad^2\bfu|^2\,\dx\mathrm{d}s\\
				&\lesssim \int_\Om\!\mathcal{E}(\rho_0,\bfu_0)\,\dx +\|\bfU\|_{L^2_t(L^2_x)}^2 +\int_0^t\!(A(s)+B_1(s)+B_2(s))
				\int_\Om\!\mathcal{E}(\rho,\bfu)\,\dx\mathrm{d}s.
			\end{aligned}\end{equation*}
		An application of Gr{\"o}nwall's lemma concludes the proof.
	\end{proof}
\subsection{Proof of Theorem \ref{mainthm}}
We proceed using a standard fixed point method in the spirit of \cite{fnp01}. 

First note that since the velocity $\widetilde{\bfu}$ is smooth up to the boundary, the density equation \eqref{l1}, or
	\begin{equation*}\begin{aligned}
		&\Pt\rho+\widetilde{\bfu}\cdot\grad\rho = -\rho\diver{\widetilde{\bfu}}-\diver(\widetilde{\rho}\bfu),\\
		&\rho(0,\cdot)=\rho_0,
	\end{aligned}\end{equation*}
has a unique solution $\rho=\rho[\bfu]$ in $C([0,T];H^1(\Om))$ by the method of characteristics (cf. \cite{sol76,valli83}), and by 
linearity this mapping is continuous.
Denote by $X_n=\mathrm{span}\{\pi_j\}_{j=1}^n$ a finite dimensional Hilbert space (with $L^2(\Om)$ inner product), 
where $\{\pi_j\}_{j=1}^\infty\subset C^\infty_0(\Om;\mathbb{R}^3)$ is dense in $C^2_0(\overline{\Om};\mathbb{R}^3)$.

We seek a fixed point $\bfu_n\in C([0,T];X_n)$ of the problem
	\begin{equation}\label{ap1}
		\bfu_n(t)=\bfu_0 + \int_0^t\!\mathcal{M}(\bfu_n(s),\rho[\bfu_n](s))\,\mathrm{d}s =: \mathcal{T}[\bfu_n],
	\end{equation}
where 
	\begin{equation*}\begin{aligned}
	\mathcal{M}(\bfu_n,\rho[\bfu_n]) 
	:= &-\bfu_n\cdot\grad\widetilde{\bfu}-\widetilde{\bfu}\cdot\grad\bfu_n-(\widetilde{\rho})^{-1}\grad(\rho[\bfu_n] p'(\widetilde{\rho}))
		-(\widetilde{\rho})^{-1}\diver\mathbb{S}(\grad\bfu_n)\\
		&+(\widetilde{\rho})^{-1}[-\rho[\bfu_n](\Pt\widetilde{\bfu}+\widetilde{\bfu}\cdot\grad\widetilde{\bfu}_n)+\rho[\bfu_n]\bff+\bfU],
	\end{aligned}\end{equation*}
contains the remainder of the momentum equation \eqref{l2}.
\begin{remark}
Strictly speaking we should first project $\bfu_0$ and $\mathcal{M}$ to $X_n$ but we ignore this point here. 
See \cite{ns04}, Section 7.7.2 for more details.
\end{remark}
Next define the ball 
	\[\mathcal{B}=\left\{\bfv\in C([0,T];X_n)\,\big|\, \sup_{t\in[0,T]}\|\bfv(t)-\bfu_0\|_{X_n}\le 1\right\},\]
which is closed, convex, and bounded. By choosing $T=T'$ small enough, the mapping $\mathcal{T}$ maps $\mathcal{B}$ into itself. Furthermore,
it may be shown that the family $\{\mathcal{T}[\bfu_n]\}_{n\ge 1}$ is equicontinuous in $C([0,T];X_n)$, hence the Arzel{\'a}-Ascoli theorem
applies and $\mathcal{T}$ is a compact operator. We deduce by Schauder's theorem that there exists a fixed point $\bfu_n$ of \eqref{ap1}.

The estimates in Lemma \ref{apriori}, being uniform up to time $T$, allow to extend the time interval for existence up to this time. Similarly,
uniformity in $n$ allows us to extract subsequences, still denoted $\rho_{n}$, $\bfu_{n}$, such that
	\begin{equation}\label{a55}\begin{aligned}
		&\rho_n\weak^* \rho\quad\,\mathrm{in}\,\, L^\infty_t(H^1_x),\\
		&\bfu_n\weak^* \bfu\quad\mathrm{in}\,\, L^\infty_t(H^1_{0,x}),\\
		&\bfu_n\weak \bfu\quad\,\,\mathrm{in}\,\, L^2_t(H^2_x).
	\end{aligned}\end{equation}
Due to this regularity and linearity of the linearized Navier-Stokes system, we may pass to the limit to conclude $(\rho,\bfu)$ is a strong solution. This concludes the proof.
\end{appendix}

\section*{Acknowledgments}
S. Doboszczak acknowledges support by the Postgraduate Research Participation Program at U.S. Air Force Institute of Technology (USAFIT), administered by the Oak Ridge Institute for Science and Education through an
interagency agreement between the U.S. Department of Energy and USAFIT.
M.T. Mohan would like to thank the Air Force Office of Scientific
Research and National Research Council for support through a Research
Associateship Award, and the USAFIT
for providing stimulating scientific environment and resources. S.S. Sritharan's work has been funded by U. S.
Army Research Office, Probability and Statistics program.


\end{document}